\tikzstyle{white}=[circle,draw=black!100,fill=white!100,thick,inner sep=0pt,minimum size =2mm]
\tikzstyle{blank}=[circle,draw=white!100,fill=white!100,thick,inner sep=0pt,minimum size =2mm]
\newtheorem{Theorem}{Theorem}[section]
\theoremstyle{plain}
\newtheorem{Thm}[Theorem]{Theorem}
\newtheorem{Lem}[Theorem]{Lemma}
\newtheorem{Cor}[Theorem]{Corollary}
\theoremstyle{definition}
\newtheorem{Defn}[Theorem]{Definition}
\newtheorem{Exam}[Theorem]{Example}
\newtheorem{Rem}[Theorem]{Remark}
\renewcommand*\env@matrix[1][*\c@MaxMatrixCols c]{%
  \hskip -\arraycolsep
  \let\@ifnextchar\new@ifnextchar
  \array{#1}}
\def\NN{\mathbb N}
\def\RR{\mathbb R}
\def\fR{\mathfrak R}
\def\fL{\mathfrak L}
\def\11{\mathbbm{1}}
\newcommand{\seq}[1]{\left\{#1\right\}}
\newcommand{\bra}[1]{\left(#1\right)}
\newcommand{\abs}[1]{\left|#1\right|}
\renewcommand{\epsilon}{\varepsilon}
\def\diam{\textup{diam}}
\tikzstyle{red}=[circle,draw=red!80,fill=red!80,thick,inner sep=0pt,minimum size =2mm]
\tikzstyle{blue}=[circle,draw=blue!80,fill=blue!80,thick,inner sep=0pt,minimum size =2mm]
\tikzstyle{white}=[circle,draw=black!100,fill=white!100,thick,inner sep=0pt,minimum size =2mm]
\tikzstyle{black}=[circle,draw=black!100,fill=black!100,thick,inner sep=0pt,minimum size =1.5mm]
\title{Additive Combination Spaces}
\author{Stephen S\'{a}nchez}
\begin{document}

\keywords{Negative type, generalized roundness, additive combination, metric embedding.}

\subjclass[2010]{46B85, 46T99,05C12}

\begin{abstract}
 We introduce a class of metric spaces called $p$-additive combinations and show that for such spaces we may deduce information about their $p$-negative type behaviour by focusing on a relatively small collection of almost disjoint metric subspaces, which we call the components. In particular we deduce a formula for the $p$-negative type gap of the space in terms of the $p$-negative type gaps of the components, independent of how the components are arranged in the ambient space. This generalizes earlier work on metric trees by Doust and Weston \cite{DW08a,DW08b}. The results hold for semi-metric spaces as well, as the triangle inequality is not used. 
\end{abstract}

\maketitle{}

\section{Introduction}

The notion of $p$-negative type is a non-linear property of metric spaces with strong connections to embedding theory. An early example of such a connection is Schoenberg's classical result that a metric space is isometric to a subset of a Euclidean space if and only if it has 2-negative type \cite{Sch37}. This was later generalized to $L_p$ spaces by Bretagnolle, Dacunha-Castelle and Krivine \cite{BDCK67}, who showed that for $0< p \leq 2$, a real normed space is linearly isometric to a linear subspace of some $L_p$ space if and only if it has $p$-negative type. While the $p$-negative type properties of a space $(X,d)$ are determined by the $p$-negative type properties of all of its finite subspaces, this is not always a fruitful method of inquiry due to the multitude of spaces to consider. While we may bound the supremal $p$-negative type of $(X,d)$ from above by looking at only a single subspace of $(X,d)$ the same cannot be said for bounding from below. In this article we detail a class of spaces, which 
we call $p$-additive combinations, for which we may determine lower bounds on the supremal $p$-negative type properties of a space by looking at only relatively few almost disjoint metric subspaces.

\begin{Defn}\label{negtype}
Let $(X,d)$ be a metric space and $p \geq 0$. Then:
\begin{enumerate}[(i)]
 \item $(X,d)$ has \emph{$p$-negative type} if and only if for all natural numbers $n \geq 2$, all finite subsets $\seq{x_1,\ldots,x_n} \subset X$, and all choices of real numbers $\alpha_1, \ldots, \alpha_n$ with $\alpha_1 + \cdots + \alpha_n = 0$, we have:
\begin{equation}\label{type}
\sum_{1 \leq i, j \leq n} d(x_i,x_j)^p \alpha_i \alpha_j \leq 0.
\end{equation}
 \item $(X,d)$ has \emph{strict $p$-negative type} if and only if it has $p$-negative type and the inequalities \eqref{type} are all strict except in the trivial case $\bra{\alpha_1,\ldots, \alpha_n} = \bra{0, \ldots, 0}$.
\end{enumerate}
\end{Defn}

It is well known that $p$-negative type possesses the following interval property: if a metric space $\bra{X,d}$ has $p$-negative type, then it has $q$-negative type for all $0 \leq q \leq p$ (see \cite[p.~11]{WW75}). So it is sensible to define the following.

\begin{Defn}\label{supremal}
The \emph{supremal $p$-negative type} $\wp \bra{X,d}$ of a metric space $\bra{X,d}$ is
\[
\wp \bra{X,d} = \sup \seq{p : (X,d) \text{ has } p\text{-negative type}}.
\]
\end{Defn}
If $\wp(X,d)$ is finite then it is easy to see that $(X,d)$ does actually have $\wp(X,d)$-negative type. We write $\wp(X)$, or simply $\wp$, if the metric space is clear from context. 

Calculating $\wp$ for a general metric space is a difficult non-linear problem. Recent work by S\'{a}nchez \cite{San12}, using results of Wolf \cite{Wol12} and Li and Weston \cite{LW10}, gives a method of calculating, at least numerically, $\wp(X,d)$ for a given finite metric space $(X,d)$. However, it struggles with spaces of many points and requires us to work with one space at a time. So it is interesting to look at bounding $\wp$ from above or below for a collection of finite spaces, and indeed for many infinite spaces this seems to be the best that we can hope for.

A method for finding upper bounds on $\wp(X,d)$ comes straight from the definition: if we find a collection of points $\seq{x_1, \ldots, x_n} \subset X$ and numbers $\alpha_1, \ldots, \alpha_n$ with $\alpha_1 + \cdots + \alpha_n =0$ for which condition \eqref{type} fails to hold for some exponent $q$, then we conclude that $\wp < q$. More generally, if $(Y,\delta)$ can be isometrically embedded in $(X,d)$ then we have $\wp(X,d) \leq \wp(Y,\delta)$.

Lower bounds on $\wp(X,d)$ are far more difficult to obtain. As just noted above, if we can embed $(X,d)$ into some other space $(Z,d')$, then we know that $\wp(X,d) \geq \wp(Z,d')$. This is of limited use since bounding the value of $\wp(Z,'d)$ may be an even more complicated problem. A different method of bounding $\wp(X,d)$ from below makes us of the \emph{$p$-negative type gap} of $(X,d)$, first introduced in \cite{DW08a,DW08b}. This numerical quantity (defined below) measures how strictly $(X,d)$ has strict $p$-negative type. If non-zero, this may be used, along with some other properties of $(X,d)$, to bound $\wp(X,d)$ from below, see for instance \cite[Theorem~5.1]{DW08a,DW08b} and \cite[Theorem~3.3]{LW10}. We will use such a bound in Section~\ref{boundsection}.

\begin{Defn}\label{typegap}
Let $(X,d)$ be a metric space with strict $p$-negative type. The \emph{$p$-negative type gap $\Gamma^p_X$} is the largest non-negative constant $\Gamma$ such that
\[
 \frac{\Gamma}{2} \bra{ \sum_{l=1}^n \abs{\alpha_l} }^2 + \sum_{1 \leq i, j \leq n} d(x_i,x_j)^p \alpha_i \alpha_j \leq 0
\]
for all natural numbers $n \geq 2$, all finite subsets $\seq{x_1, \ldots, x_n} \subseteq X$ and all choices of real numbers $\alpha_1, \ldots, \alpha_n$ with $\alpha_1 + \cdots + \alpha_n =0$.
\end{Defn}

The definition of $\Gamma^p_X$ given above is not the original form in which it was defined in \cite{DW08a,DW08b}, and its translation into the $p$-negative type setting gives the awkward scaling factor above. We will in fact work with its original incarnation, which we come to in Section~\ref{GeneralizedRoundness}.

The main result of this paper is the following theorem. It shows that if $(X,d)$ is what we call a $p$-additive combination space, then we can deduce information about the $p$-negative type, strict $p$-negative type and $p$-negative type gap properties of $(X,d)$ simply by looking at a relatively small collection of metric subspaces within $(X,d)$.

\begin{Thm}\label{main}
Suppose $p \geq 0$. Let $(X,d)$ be a $p$-additive combination of $(X_1,d_1),\ldots, (X_n,d_n)$. 
\begin{enumerate}[(i)]
 \item If $(X_1,d_1),\ldots,(X_n,d_n)$ all have $p$-negative type, then so does $(X,d)$.
 \item If $(X_1,d_1),\ldots,(X_n,d_n)$ all have strict $p$-negative type, then so does $(X,d)$.
 \item If $\Gamma^p_{X_1},\ldots,\Gamma^p_{X_n} > 0$, then $\Gamma^p_X >0$ and is given by
\[
\Gamma^p_X = \bra{ \sum_{i=1}^n \bra{\Gamma^p_{X_i}}^{-1} }^{-1}.
\]
\end{enumerate}
\end{Thm}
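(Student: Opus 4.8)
The plan is to reduce each of the three assertions to a single optimization problem on the ambient space and then show that this problem \emph{decouples} across the components, once we understand the right notion of ``$p$-additive combination.'' Although the definition of $p$-additive combination is not reproduced in the excerpt, the name and the structure of part~(iii) strongly suggest that the defining property is a decomposition of the distance function of the form $d(x,y)^p = \sum_{i=1}^n d_i^*(x,y)^p$, where $d_i^*$ is the (semi-)metric obtained by collapsing everything outside $X_i$ to a distinguished gluing point $p_i \in X_i$ (so that $d_i^*$ restricts to $d_i$ on $X_i$ and is constant on the complement). Granting this, the first step is to fix a finite configuration $x_1,\dots,x_m \in X$ and weights $\alpha_1,\dots,\alpha_m$ with $\sum_l \alpha_l = 0$, and to write the quadratic form $Q(\alpha) := \sum_{i,j} d(x_i,x_j)^p \alpha_i \alpha_j$ as $\sum_{k=1}^n Q_k(\alpha)$, where $Q_k$ is the corresponding quadratic form for the collapsed semi-metric $d_k^*$. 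The key linear-algebra point is that $Q_k(\alpha)$ equals the $p$-negative-type form on $X_k$ evaluated at the \emph{pushforward weights}: group the indices $l$ according to which component ``side'' of $p_k$ they lie on, collapse the masses of all points not in $X_k$ onto $p_k$, and observe that the resulting weight vector on $X_k$ still sums to zero. This gives $Q_k(\alpha) = Q^{X_k}(\beta^{(k)})$ for a vector $\beta^{(k)}$ of weights on $X_k$ with $\sum \beta^{(k)} = 0$.

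With that reduction in hand, parts (i) and (ii) are immediate: if each $X_k$ has $p$-negative type then each $Q^{X_k}(\beta^{(k)}) \le 0$, hence $Q(\alpha) = \sum_k Q^{X_k}(\beta^{(k)}) \le 0$; for strictness one checks that if $\alpha \ne 0$ then at least one pushforward $\beta^{(k)}$ is nonzero (this is where the almost-disjointness of the components and the fact that the points genuinely spread across components is used), so the corresponding term is strictly negative. Part (iii) is the substantive one. Using the original (generalized roundness) incarnation of $\Gamma^p$ promised in Section~\ref{GeneralizedRoundness}, the gap is the optimal constant $\Gamma$ in $\frac{\Gamma}{2}\big(\sum_l |\alpha_l|\big)^2 \le -Q(\alpha)$; equivalently $\Gamma^p_X = \inf \{-2Q(\alpha) : \sum_l |\alpha_l| = 1,\ \sum_l \alpha_l = 0\}$, and likewise $\Gamma^p_{X_k} = \inf\{-2Q^{X_k}(\beta) : \|\beta\|_1 = 1,\ \sum\beta = 0\}$. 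So I need
\[
\inf_{\|\alpha\|_1 = 1,\ \sum\alpha = 0}\ \sum_{k=1}^n \big(-2Q^{X_k}(\beta^{(k)}(\alpha))\big) \ =\ \Big(\sum_{k=1}^n (\Gamma^p_{X_k})^{-1}\Big)^{-1}.
\]
The mechanism is the standard ``resistances in series'' / harmonic-mean optimization: for each $k$ we have $-2Q^{X_k}(\beta^{(k)}) \ge \Gamma^p_{X_k}\,\|\beta^{(k)}\|_1^2$, and the $\ell^1$-masses $t_k := \|\beta^{(k)}\|_1$ satisfy a linear relation forcing $\sum_k t_k \ge 1$ (each unit of the original mass, and each collapsed clump, contributes once per component whose gluing point separates it from its partner), so minimizing $\sum_k \Gamma^p_{X_k} t_k^2$ subject to $\sum_k t_k \ge 1$ by Lagrange multipliers / Cauchy--Schwarz gives exactly $(\sum_k (\Gamma^p_{X_k})^{-1})^{-1}$, with equality when $t_k \propto (\Gamma^p_{X_k})^{-1}$.

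The upper bound on $\Gamma^p_X$ — i.e.\ exhibiting a configuration attaining (or approaching) the claimed value — requires assembling near-optimal configurations from each $X_k$: take a weight vector $\beta^{(k)}$ on $X_k$ that nearly achieves $\Gamma^p_{X_k}$, scale it so that $\|\beta^{(k)}\|_1$ is proportional to $(\Gamma^p_{X_k})^{-1}$, and then \emph{glue} these along the gluing points into a single configuration on $X$ whose pushforwards recover the $\beta^{(k)}$; the bookkeeping is that the masses sitting at the gluing points must be chosen consistently so that the global weights sum to zero while each local pushforward is the intended $\beta^{(k)}$. I expect the main obstacle to be exactly this gluing/consistency step: verifying that the $\ell^1$-mass relation $\sum_k t_k \ge 1$ holds with the correct multiplicities regardless of how the components are arranged (which is the ``independent of arrangement'' claim in the abstract), and that near-extremal local configurations can always be simultaneously realized as pushforwards of one global configuration with zero total mass. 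Once the mass accounting is pinned down, the optimization itself is a one-line convexity argument, and combining it with parts (i)–(ii) (which give $\Gamma^p_X \ge 0$ and the requisite strictness) completes the proof.
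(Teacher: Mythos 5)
Your proposal is correct and follows essentially the same route as the paper: the paper phrases everything in the equivalent generalized-roundness (simplex) language and proves exactly your two key facts, namely the additive decomposition of the gap function across the collapsed components (Lemma~4.3) and the mass subadditivity $\lambda_1+\cdots+\lambda_n\ge\lambda$ (Lemma~4.4, your ``$\sum_k t_k\ge 1$''), then runs the same harmonic-mean optimization for the lower bound and the same glue-point mass-bookkeeping construction of a (near-)extremal configuration for the upper bound. The only differences are organizational: the paper first settles the case $p=1$ with $n=2$ and extends by induction on $n$ and by the rescaling identity $\Gamma^q_{X^c}=\Gamma^{qc}_X$, whereas you treat all components and the exponent $p$ simultaneously.
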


We will formally define $p$-additive combinations in the coming sections. It happens that the above theorem can be deduced quite easily once the $p=1$ case is established. For this reason we shall first focus on the $p=1$ case and additive combinations, extending to other values of $p$ later on. Quite interestingly, Theorem~\ref{main} holds independently of how the spaces $(X_1,d_1), \ldots, (X_n,d_n)$ are combined to form $(X,d)$. 

\begin{Rem}\label{matrix}
 We note briefly that Kokkendorff proved the parts (i) and (ii) of Theorem~\ref{main} for the case $p=1$ in his Ph.D.~thesis \cite[Ch 4, Cor 5]{Kok02}. His result spoke in terms of \emph{one point unions} and gave an algebraic proof. We feel that our exposition in terms of generalized roundness $p$ gives a more geometric understanding of the result, and allows us to extend to part (iii) more naturally. 
\end{Rem}

\section{Additive Combination Spaces}

In essence, an additive combination of $(X_1,d_1)$ and $(X_2,d_2)$ is a space made by picking a point in each space and glueing them together. 

\begin{Defn}\label{additive}
We say that a metric space $(X,d)$ is an \emph{additive combination} of metric spaces $(X_1,d_1)$ and $(X_2,d_2)$ if there exist sets $X_1',X_2' \subset X$ and a point $x \in X$ such that:
\begin{enumerate}[(i)]
 \item $X_1' \cup X_2' = X$;
 \item $X_1' \cap X_2' = \seq{x}$;
 \item $(X_1',d)$ is isometrically isomorphic to $(X_1,d_1)$ and $(X_2',d)$ is isometrically isomorphic to $(X_2,d_2)$; and
 \item if $y \in X_1'$ and $z \in X_2'$ then
\[
 d(y,z) = d(y,x) + d(x,z).
\]
\end{enumerate}
 We say that $(X_1,d_1)$ and $(X_2,d_2)$ are \emph{components of $(X,d)$}. The single point in $X_1' \cap X_2'$ will be referred to as the \emph{glue-point} of $(X_1,d_2)$ and $(X_2,d_2)$, and usually be denoted by $x$.
\end{Defn}

We may view additive combinations in two different ways: one is deconstructive, the other constructive. In the deconstructive setting, from a given space we may find two subspaces which can be additively combined to give the original space. Many different decompositions may be possible.

\begin{Exam}\label{G}
Consider the following graph $G$ endowed with the shortest path metric $d$.
\begin{center}
\begin{tikzpicture}
 \node (G) at (0,2) {$(G,d)$};
 \node[white,label=above:{$x$}] (1) at (0,0) {};
 \path (1) +(0:1.5cm) node[white,label=110:{$v_6$}] (6) {};
 \path (6) +(60:1.5cm) node[white,label=left:{$v_7$}] (7) {};
 \path (6) +(-60:1.5cm) node[white,label=left:{$v_8$}] (8) {};
\draw
(1) -- (6)
(6) -- (7)
(6) -- (8)
;

 \path (1) +(126:1.5cm) node[white,label=above:{$v_2$}] (2) {};
 \path (2) +(-162:1.5cm) node[white,label=above:{$v_3$}] (3) {};
 \path (1) +(-126:1.5cm) node[white,label=below:{$v_5$}] (5) {};
 \path (5) +(162:1.5cm) node[white,label=below:{$v_4$}] (4) {};
\draw
(1) -- (2)
(2) -- (3)
(3) -- (4)
(4) -- (5)
(5) -- (1)
;
\end{tikzpicture}
\end{center}
Then $(G,d)$ can be seen to be the additive combination of the two graphs below, each endowed with the shortest path metric.
\begin{center}
\begin{tikzpicture}[scale=1]
 \node (G1) at (-1.5,2) {$(G_1,d_1)$};
 \node[white,label=above:{$v_9$}] (9) at (2,0) {};
 \path (9) +(0:1.5cm) node[white,label=110:{$v_6$}] (6) {};
 \path (6) +(60:1.5cm) node[white,label=left:{$v_7$}] (7) {};
 \path (6) +(-60:1.5cm) node[white,label=left:{$v_8$}] (8) {};
\draw
(9) -- (6)
(6) -- (7)
(6) -- (8)
;

 \node (G2) at (3.273,2) {$(G_2,d_2)$};
 \node[white,label=above:{$v_1$}] (1) at (0,0) {};
 \path (1) +(126:1.5cm) node[white,label=above:{$v_2$}] (2) {};
 \path (2) +(-162:1.5cm) node[white,label=above:{$v_3$}] (3) {};
 \path (1) +(-126:1.5cm) node[white,label=below:{$v_5$}] (5) {};
 \path (5) +(162:1.5cm) node[white,label=below:{$v_4$}] (4) {};
\draw
(1) -- (2)
(2) -- (3)
(3) -- (4)
(4) -- (5)
(5) -- (1)
;
\end{tikzpicture}
\end{center}

Note that we could also view $(G,d)$ as the additive combination of the two following graphs, each endowed with the shortest path metric.

\begin{center}
\begin{tikzpicture}[scale=1]
 \node (G1) at (-0.5,2) {$(G_3,d_3)$};
 \node[white,label=above:{$v_1$}] (1) at (0,0) {};
 \path (1) +(126:1.5cm) node[white,label=above:{$v_2$}] (2) {};
 \path (2) +(-162:1.5cm) node[white,label=above:{$v_3$}] (3) {};
 \path (1) +(-126:1.5cm) node[white,label=below:{$v_5$}] (5) {};
 \path (5) +(162:1.5cm) node[white,label=below:{$v_4$}] (4) {};
 \path (1) +(1.5,0) node[white,label=above:{$v_6$}] (6) {};
 \path (6) +(-60:1.5cm) node[white,label=left:{$v_7$}] (7) {};
\draw
(1) -- (2)
(2) -- (3)
(3) -- (4)
(4) -- (5)
(5) -- (1)
(1) -- (6)
(6) -- (7)
;

 \node (G2) at (4.5,2) {$(G_4,d_4)$};
 \path (2.5,0) +(0:1.5cm) node[white,label=110:{$v_6$}] (6) {};
 \path (6) +(60:1.5cm) node[white,label=left:{$v_7$}] (7) {};
\draw
(6) -- (7)
;
\end{tikzpicture}
\end{center}
\end{Exam}

We can easily extend this idea to more than two spaces. 

\begin{Defn}\label{additivemulti}
 We say that a metric space $(X,d)$ is an additive combination of metric spaces $(X_1,d_1), \ldots, (X_n,d_n)$ if $(X,d)$ may be constructed by successively forming additive combinations of these spaces. That is, there is some ordering $\pi \in S_n$ such that if we first additively combine $(X_{\pi(1)},d_{\pi(1)})$ and $(X_{\pi(2)}, d_{\pi(2)})$, and then additively combine this with $(X_{\pi(3)},d_{\pi(3)})$, and so forth, until all $n$ spaces have been additively combined, the result is $(X,d)$.
\end{Defn}

There may of course be a different ordering $\sigma \in S_n$ that may be used to give the same space. The specific ordering has no effect on the final space, we just require that there be at least one.

We may also view additive combinations constructively: from $n$ spaces $(X_1,d_1),\ldots,(X_n,d_n)$ we may combine them appropriately to form a new space $(X,d)$, which is an additive combination of the $(X_1,d_1),\ldots, (X_n,d_n)$. There may be many non-isomorphic ways of doing this.
\begin{Exam}
 Consider the following three graphs each endowed with the shortest path metric. (We leave them unlabeled for simplicity.)
\begin{center}
 \begin{tikzpicture}
 
 \node (X1) at (0.75,2) {$(X_1,d_1)$};
 \node[white] (1) at (0,0) {};
 \path (1) + (1.5,0) node[white] (2) {};  
 \draw (1) -- (2);

 \node (X2) at (4.7,2) {$(X_2,d_2)$};
 \node[white] (3) at (4,0) {};
 \path (3) + (0,1.5) node[white] (4) {};
 \path (4) + (1.5,0) node[white] (5) {};
 \path (5) + (0,-1.5) node[white] (6) {};
 \draw (3) -- (4)
       (4) -- (5)
       (5) -- (6)
       (3) -- (6)
       ;          

 \node (X3) at (9.4,2) {$(X_3,d_3)$};
 \node[white] (7) at (8,0) {};
 \path (7) + (60:1.5cm) node[white] (8) {};
 \path (7) + (0:1.5cm) node[white] (9) {};
 \path (9) + (0:1.5cm) node[white] (10) {};
 \draw (7) -- (8)
       (8) -- (9)
       (7) -- (9)
       (9) -- (10)
       ;

 \end{tikzpicture}
\end{center}
Then there are 16 non-isomorphic graphs which may be formed as additive combinations of $(X_1,d_1),(X_2,d_2)$ and $(X_3,d_3)$. Below are two such examples.
\begin{center}
 \begin{tikzpicture}
\node (Y1) at (3,2) {$(Y_1,\delta_1)$};
\node[white] (1) at (0,0) {};
\path (1) + (1.5,0) node[white] (2) {};
\path (2) + (0,1.5) node[white] (3) {};
\path (3) + (1.5,0) node[white] (4) {};
\path (4) + (0,-1.5) node[white] (5) {};
\path (5) + (0:1.5cm) node[white] (6) {};
\path (5) + (60:1.5cm) node[white] (7) {};
\path (6) + (1.5,0) node[white] (8) {};
\draw (1) -- (2)
      (2) -- (3)
      (3) -- (4)
      (4) -- (5)
      (2) -- (5)
      (5) -- (6)
      (5) -- (7)
      (6) -- (7)
      (6) -- (8)
      ;
\node (Y2) at (10.2,2) {$(Y_2,\delta_2)$};
\node[white] (10) at (8,0) {};
\path (10) + (1.5,0) node[white] (13) {};
\path (10) + (0,1.5) node[white] (11) {};
\path (11) + (1.5,0) node[white] (12) {};
\path (13) + (0,-1.5) node[white] (14) {};
\path (13) + (1.5,0) node[white] (15) {};
\path (15) + (0:1.5cm) node[white] (16) {};
\path (15) + (60:1.5cm) node[white] (17) {};
\draw (14) -- (13) -- (10) -- (11) -- (12) -- (13) -- (15) -- (16) -- (17) -- (15) ;
 \end{tikzpicture}
\end{center}

\end{Exam}

We use the term \emph{additive} to describe this sort of combination since metric spaces that are embeddable in some metric tree $(T,d_T)$ are known as \emph{additive metric spaces}, and we are joining metric spaces together to form `trees of metric spaces'. The fact that we are focusing on trees rather than general graphs is because trees always have a unique path between two distinct vertices, and so the definition of the metric $d$ may be done recursively. A general graph need not have a unique path between two vertices. This would ruin the iterative definition of the metric $d$ in Definition~\ref{additive}.

\section{Generalized roundness $p$}\label{GeneralizedRoundness}

Our proof of Theorem~\ref{main} does not work with $p$-negative type directly, but an equivalent property known as \emph{generalized roundness $p$}. Enflo \cite{Enf69} introduced the ideas of roundness and generalized roundness to answer in the negative a question of Smirnov's: ``Is every separable metric space uniformly homeomorphic to a subset of $L_2[0,1]$?'' In 1997 Lennard, Tonge and Weston \cite{LTW97} showed that the notions of negative type and generalized roundness coincide: a metric space $\bra{X,d}$ has $p$-negative type if and only if it has generalized roundness $p$. The notion of strict generalized roundness $p$ was formalized by Doust and Weston in \cite{DW08a}, and shown to be equivalent to strict $p$-negative type.

Although it is equivalent to $p$ negative type, the setting of generalized roundness $p$ offers a different perspective which we find helpful here. The form in which we will be defining generalized roundness $p$ is slightly non-standard and will require some extra technical definitions, but allow us to prove Theorem~\ref{main} more easily.

\begin{Defn}\label{simplex}
Let $s,t \in \NN$ and $X$ a set. An \emph{$(s,t)$-simplex $D$} is a vector $\bra{a_1,\ldots, a_s, b_1, \ldots, b_t} \in X^{s+t}$ of $(s+t)$ not necessarily distinct points, along with a load vector $\omega = \bra{m_1,\ldots, m_s,n_1,\ldots,n_t} \in \RR^{s+t}_+$ that assigns a non-negative weight $m_j \geq 0$ or $n_i \geq 0$ to each point $a_j$ or $b_i$ respectively, satisfying 
\[
m_1 + \cdots + m_s = n_1 + \cdots + n_t.
\]
We may denote such a simplex $D$ by $[a_i(m_i);b_j(n_j)]$. 
\end{Defn}

The points $a_1, \ldots, a_s$ will be known as the \emph{$a$-team} in $D$, while the $b_1 \ldots, b_t$ will be known as the \emph{$b$-team} in $D$. Note that Definition~\ref{simplex} does not preclude a point $z \in X$ from being a member of both the $a$-team and the $b$-team in a particular simplex. If the number of points in the $a$-team and $b$-team are not immediately relevant, we may refer to a simplex $D$, rather than an $(s,t)$-simplex $D$.

\begin{Defn}\label{GR}
 Let $(X,d)$ be a metric space and $p \geq 0$. Then $(X,d)$ has \emph{generalized roundness $p$} if and only if for all $s,t \in \NN$ and all $(s,t)$-simplices $D = \left[a_i \bra{m_i} ; b_j \bra{n_j} \right]$ in $X$, we have
\begin{equation}\label{roundness}
\gamma^p(D) = \sum_{i,j = 1}^{s,t} m_i n_j d \bra{a_i, b_j}^p - \sum_{1 \leq i_1 < i_2 \leq s} m_{i_1} m_{i_2} d \bra{a_{i_1}, a_{i_2}}^p - \sum_{1 \leq j_1 < j_2 \leq t} n_{j_1} n_{j_2}d \bra{b_{j_1}, b_{j_2}}^p \geq 0.  
\end{equation}
The function $\gamma^p$ is known as the \emph{simplex gap function}.
\end{Defn}

We may write $\gamma$ instead of $\gamma^1$. As we will be working with the simplex gap function extensively, it is convenient to further define the following.

\begin{Defn}\label{LR}
 Let $D=[a_i(m_i);b_j(n_j)]$ be an $(s,t)$-simplex in $(X,d)$. We define for $p \geq 0$ the functions $\fL^p(\cdot)$ and $\fR^p(\cdot)$ by
\[
\fL^p(D) = \sum_{1 \leq i_1 < i_2 \leq s} m_{i_1} m_{i_2} d \bra{a_{i_1}, a_{i_2}}^p + \sum_{1 \leq j_1 < j_2 \leq t} n_{j_1} n_{j_2}d \bra{b_{j_1}, b_{j_2}}^p
\]
and
\[
\fR^p(D) = \sum_{i,j = 1}^{s,t} m_i n_j d \bra{a_i, b_j}^p.
\]
So that
\[
 \gamma^p(D) = \fR^p(D)-\fL^p(D).
\]
\end{Defn}

Before defining strict generalized roundness $p$, we need to deal with the fact that the points in our simplices may not be distinct. This offers us flexibility later, but at a technical cost which we deal with now. In particular, we may have two simplices $D,D'$ that are different with respect to Definition~\ref{simplex}, but for which the sums $\gamma^p(D)$ and $\gamma^p(D')$ are simple re-arrangements of one another. In such a case, we find it convenient to consider such simplices equivalent. To do so we define the following operations.

\begin{Defn}\label{cancel}
Let $D$ be an $(s,t)$-simplex $[a_i(m_i);b_j(n_j)]$ of not necessarily distinct points. We define the following procedures that we may apply to $D$.
\begin{enumerate}[(i)]
 \item Re-index the members of the $a$-team and $b$-team, or swap the roles of all the $a$ and $b$ terms to form a new simplex $D'$.
 \item If $a_1 = a_2$, then form the $(s-1,t)$-simplex 
\[
D' = [ a_1(m_1+m_2),a_3(m_3),\ldots, a_s(m_s) ; b_j(n_j)].
\]
 \item If $a_1 = b_1$ with $m_1 \geq n_1$, then form the $(s,t-1)$-simplex
\[
D' = [a_1(m_1-n_1),a_2(m_2),\ldots, a_s(m_s); b_2(n_2), \ldots, b_t(n_t)].
\]
\item If $m_1=0$ then form the $(s-1,t)$-simplex
\[
 D' = [a_2(m_2),\ldots,a_s(m_s);b_1(n_1), \ldots, b_t(n_t)].
\]
\end{enumerate}
We also allow the inverses of (ii) - (iv), each of which involves adding new points and weights to the simplex.
If a simplex $D''$ may be obtained from $D$ by successively applying the above procedures or their inverses, then we say that $D''$ and $D$ are \emph{equivalent}.
\end{Defn}

Since procedure (i) allows us to re-index the points and swap teams, the procedures (ii) - (iv) and their inverses may be applied to any appropriate points in the simplex, not just the first few of each team. It is not difficult to see that Definition~\ref{cancel} does indeed define an equivalence relation on the collection of weighted simplices in $(X,d)$: reflexivity comes from performing no operations, symmetry comes from performing the reverse of the original procedures, and transitivity from performing two sets of procedures one after the other. The usefulness of the above procedures lies in the following result.

\begin{Lem}
Let $D$ and $D'$ be equivalent weighted simplices as per Definition~\ref{cancel}. Then for all $p \geq 0$ we have $\gamma^p(D) = \gamma^p(D')$.
\end{Lem}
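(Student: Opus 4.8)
The plan is to verify the claim $\gamma^p(D) = \gamma^p(D')$ by checking it for each of the generating procedures (i)--(iv) of Definition~\ref{cancel} separately, and then invoking transitivity: since any equivalence is a finite composition of these procedures and their inverses, and since each procedure preserves $\gamma^p$, so does any composition. Because the inverse of a procedure is again handled by the same equality read in reverse, it suffices to treat the four forward procedures.

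\medskip

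For procedure (i), re-indexing within a team merely permutes the summation indices in each of the three sums defining $\gamma^p$ in \eqref{roundness}, so the value is unchanged; swapping the $a$-team and $b$-team leaves $\fR^p(D)$ untouched (the double sum $\sum m_i n_j d(a_i,b_j)^p$ is symmetric in the two roles) and swaps the two halves of $\fL^p(D)$ with each other, so $\fL^p(D)$ is also unchanged. For procedure (ii), with $a_1 = a_2 = a$, one expands both $\fR^p(D')$ and $\fL^p(D')$ after replacing the pair $a_1(m_1), a_2(m_2)$ by the single point $a(m_1+m_2)$: in $\fR^p$ the contribution $(m_1+m_2) n_j d(a,b_j)^p$ equals $m_1 n_j d(a_1,b_j)^p + m_2 n_j d(a_2,b_j)^p$, matching $\fR^p(D)$; in $\fL^p$ the only change is that the term $m_1 m_2 d(a_1,a_2)^p = m_1 m_2 \cdot 0 = 0$ present in $D$ disappears, and the cross terms $(m_1+m_2) m_k d(a,a_k)^p$ split correctly, so $\fL^p(D') = \fL^p(D)$. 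Hence $\gamma^p(D') = \gamma^p(D)$. For procedure (iii), with $a_1 = b_1 = z$ and $m_1 \geq n_1$, the key observation is that the zero-distance term $m_1 n_1 d(z,z)^p = 0$ sits in $\fR^p(D)$ and contributes nothing, while removing the matched weight $n_1$ from both teams redistributes the remaining interactions of $z$ exactly: one checks that $\fR^p(D) - \fL^p(D)$ and $\fR^p(D') - \fL^p(D')$ agree by collecting, for each other point $a_k$ and $b_l$, the coefficients of $d(z,a_k)^p$, $d(z,b_l)^p$, and $d(a_k,b_l)^p$ on both sides and seeing they coincide after the substitution $m_1 \mapsto m_1 - n_1$, $n_1 \mapsto 0$ (the load-balance constraint $\sum m_i = \sum n_j$ being preserved). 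Procedure (iv) is immediate: setting $m_1 = 0$ kills every term involving $a_1$ in all three sums, which is precisely the passage to $D'$.

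\medskip

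I do not expect a genuine obstacle here — this is a bookkeeping lemma — but the step requiring the most care is procedure (iii), the cancellation of a point shared between the $a$-team and $b$-team, since it simultaneously alters a diagonal ($d(z,z)^p$) term in $\fR^p$ and the structure of both teams; the cleanest way to present it is to write $\gamma^p(D)$ as a single quadratic-form expression $\sum_{u,v} \varepsilon_u \varepsilon_v w_u w_v d(u,v)^p$ over the disjoint union of the two teams with signs $\varepsilon = +1$ on the $a$-team and $-1$ on the $b$-team (noting $d(u,u)^p = 0$ so diagonal terms are harmless), and then observe that each procedure corresponds to an operation on this signed weighted point configuration that visibly leaves the form invariant. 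I would likely phrase the whole proof in that unified language to avoid repeating essentially the same expansion four times.
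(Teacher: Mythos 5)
Your proof is correct and follows essentially the same route as the paper, which likewise reduces the lemma to checking that each single procedure of Definition~\ref{cancel} (and hence each inverse) preserves $\gamma^p$ by expanding and matching terms. Your closing suggestion to recast $\gamma^p(D)$ as the signed quadratic form $-\tfrac{1}{2}\sum_{u,v}\varepsilon_u\varepsilon_v w_u w_v d(u,v)^p$ is a clean way to organize the same bookkeeping (and makes procedure (iii) transparent), but it is a presentational refinement rather than a different argument.
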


This lemma may be proved by checking that $\gamma^p(D) = \gamma^p(D')$ for any simplices differing by a single application of any of the procedures, or their inverses, in Definition~\ref{cancel}. This is not overly difficult but tedious, coming from directly writing out the sums of both $\gamma^p(D)$ and $\gamma^p(D')$ and matching corresponding terms.

\begin{Defn}
 A simplex $D$ in $(X,d)$ is said to be \emph{degenerate} if it is equivalent to a simplex containing no non-zero weights.
\end{Defn}

In the $p$-negative type setting, a degenerate simplex $D$ corresponds to the null vector $(0, \ldots, 0)$. All non-degenerate simplices correspond to a non-zero vector.

\begin{Defn}
 If $D$ is a non-degenerate simplex, then a \emph{refinement} of $D$ is any simplex $D^*$ that is equivalent to $D$ and has distinct points and strictly positive weights. 
\end{Defn}

The refinements of a non-degenerate simplex $D$ are all related by procedure (i) of definition Definition~\ref{cancel}. That is, we may obtain one from another by simply re-ordering the points and possibly swapping the $a$ and $b$ teams. In this sense, there is essentially a unique refinement for each non-degenerate simplex.

\begin{Defn}\label{weights}
 If $D$ is a non-degenerate simplex, and the $(s,t)$-simplex $D^* = [a_i(m_i) ; b_j (n_j)]$ is a refinement of $D$, then the quantity
\[
 \lambda = \sum_{i=1}^s m_i = \sum_{j=1}^t n_j
\]
is called the \emph{weight} of $D$. We say $D$ is a \emph{$\lambda$-weighted simplex}. If $\lambda =1$ then we say that $D$ is a \emph{normalized simplex}. The weight of a degenerate simplex is defined to be zero.
\end{Defn}

Our above discussion on simplices means we can now define strict generalized roundness.

\begin{Defn}\label{strict}
 Let $(X,d)$ be a metric space and $p \geq 0$. Then $(X,d)$ has \emph{strict generalized roundness $p$} if and only if for all $s,t \in \NN$ and all non-degenerate $(s,t)$-simplices $D = \left[a_i \bra{m_i} ; b_j \bra{n_j} \right]_{s,t}$ in $X$, we have $\gamma^p(D)>0$.
\end{Defn}

In the generalized roundness $p$ setting, the $p$-negative type gap  has a more elegant incarnation. We have:
\begin{equation}\label{typegap2}
 \Gamma_X^p = \inf \seq{ \gamma^p(D) : D \text{ is a normalized simplex in } X }.
\end{equation}
Using a compactness argument, Li and Weston showed in \cite[Theorem 4.1]{LW10} that for finite metric spaces, the infimum in \eqref{typegap2} is actually a minimum. In this case, $(X,d)$ has strict $p$-negative type if and only if $\Gamma^p_X >0$ (see \cite[Theorem~4.1]{LW10}). So if $(X,d)$ is a finite metric space with strict $p$-negative type, then there exists at least one normalized simplex $D$ in $(X,d)$ such that $\gamma^p(D) = \Gamma^p_X$. Such a simplex will be called \emph{extremal}. In the infinite setting we have no such guarantee -- $(X,d)$ may have strict $p$-negative type yet $\Gamma^p_X =0$ (see \cite[Theorem~5.7]{DW08a}). It is in the form \eqref{typegap2} that the $p$-negative type gap was first introduced in \cite{DW08a}. The equivalent form in Definition~\ref{typegap} comes from translating \eqref{typegap2} into the $p$-negative type setting. It is in this translation process that the scaling factor appears in Definition~\ref{typegap}, since the $p$-negative type inequality does not 
require any normalization of $\alpha_1, \ldots, \alpha_n$.

If $D$ is a $\lambda$-weighted simplex, with $\lambda \neq 1$, then we can form a normalized simplex $D'$ by taking a copy of $D$ and dividing all the weights by $\lambda$. Note that this new simplex $D'$ is \emph{not} equivalent to $D$, but has the property that for all $p \geq 0$
\begin{equation}\label{scaling}
 \gamma^p(D') = \frac{1}{\lambda^2} \gamma^p(D).
\end{equation}
Thus we can reformulate \eqref{typegap2} as
\[
  \Gamma_X^p = \inf \seq{ \frac{1}{\lambda^2} \gamma^p(D) : D \text{ is a } \lambda\text{-weighted non-degenerate simplex in } X }.
\]

\section{The $p=1$ Case}

In this section we provide a proof of Theorem~\ref{main} in the case $p=1$. To do this we first establish some lemmas about simplices in additive combinations. 

Since $\Gamma^1_X$ is defined in terms of simplices, we need to move from a single simplex across the whole space to simplices in each component. We first look at how to split a weighted simplex $D$ in an additive connection space $X$ of $X_1$ and $X_2$, into two simplices $D_1$ and $D_2$, one in each component. The basic idea is that $D_1$ and $D$ are the same, except any points and weights in $X_2$ are moved to the joining point $x$. $D_2$ is defined similarly. The details are below.

\begin{Defn}\label{components}
Let $(X,d)$ be an additive combination of $(X_1,d_1)$ and $(X_2,d_2)$ with glue-point $x$. Let $D$ be a non-degenerate simplex in $X$. We define two simplices $D_1,D_2$, the \emph{components of $D$}, in the following way. For $D_1$, start with a copy of $D$. For any point $z \in D$ that belongs to $X_1$, do nothing. For any point $z \in D$ that is an element of $X_2$, substitute the point with $x$, giving $x$ the same weight as the original point, and in the same team. That is
\[
D \rightarrow D_1  \quad : \quad  \begin{cases}
                                   a_i(m_i) \rightarrow a_i(m_i) & \mbox{if } a_i \in X_1\\
                                   b_j(n_j) \rightarrow b_j(n_j) & \mbox{if } b_j \in X_1\\
                                   a_i(m_i) \rightarrow x(m_i) & \mbox{if } a_i \in X_2\\
                                   b_j(n_j) \rightarrow x(n_j) & \mbox{if } b_j \in X_2.
                                   \end{cases}
\]
This process will often mean that the glue-point $x$ belongs to the $a$-team and $b$-team multiple times. A clearly analogous procedure is used to define $D_2$.
\end{Defn}

Note that the above definition allows the possibility that one of $D_1, D_2$ is degenerate. In such a case, the original simplex $D$ is essentially contained in $X_1$ or $X_2$: if $D^*$ is a refinement of $D$, then the points of $D^*$ are either wholly contained in $X_1$ or wholly contained in $X_2$. We may extend the above definition to additive combinations of more than two metric spaces.

\begin{Defn}\label{multiplecomponents}
 Let $(X,d)$ be an additive combination of $(X_1,d_1), \ldots, (X_n,d_n)$. Let $D$ be a non-degenerate simplex in $X$. The \emph{components} of $D$ in $(X,d)$ are the simplices $D_1, \ldots, D_n$ formed in the following way. Let $\pi \in S_n$ be some ordering so that $(X,d)$ may be constructed by additively combining $(X_{\pi(1)},d_{\pi(1)})$ with $(X_{\pi(1)},d_{\pi(2)})$, and then additively combining this with $(X_{\pi(3)},d_{\pi(3)})$ and so forth. Working backwards, split $D$ into two components via Definition~\ref{components}, one for $(X_{\pi(n)},d_{\pi(n)})$, and another for the rest of the space. Continue this process, essentially reversing the construction of $(X,d)$ from the component spaces $(X_1, d_1), \ldots, (X_n,d_n)$. Clearly any other suitable ordering $\pi' \in S_n$ would produce the same components $D_1, \ldots, D_n$, though possibly in a different order.
\end{Defn}

\begin{Exam}\label{simplexexample}
Recall the metric space $(G,d)$ introduced in Example~\ref{G}, the metric combination of spaces $(G_1,d_1)$ and $(G_2,d_2)$. Consider the following normalized simplex in $(G,d)$
\[
 D= [ x(0.3),v_3(0.2),v_4(0.2),v_8(0.3) ; v_6(0.4),v_2(0.5),v_5(0.1) ].
\]
Then Definition~\ref{components} gives that
\[
 D_1 = [x(0.3),v_3(0.2),v_4(0.2),x(0.3);x(0.4),v_2(0.5),v_5(0.1)]
\]
and
\[
 D_2 = [ x(0.3),x(0.2),x(0.2),v_8(0.3);v_6(0.4),x(0.5),x(0.1) ],
\]
which have refinements 
\[
 D_1^* = [ x(0.2),v_3(0.2),v_4(0.2) ; v_2(0.5),v_5(0.1) ] \quad \mbox{and} \quad D_2^* = [x(0.1),v_8(0.3);v_6(0.4)].
\]

\end{Exam}

\begin{Lem}\label{gamma}
Let $(X,d)$ be an additive combination of $(X_1,d_1)$ and $(X_2,d_2)$ with glue-point $x$. Let $D$ be a simplex and let $D_1,D_2$ be the components of $D$. Then
\[
\gamma(D) = \gamma(D_1) + \gamma(D_2)
\]
\end{Lem}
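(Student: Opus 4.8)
The plan is to compute $\gamma(D)$ directly from Definition~\ref{GR} and show that, after regrouping terms according to which component each point lies in, the expression splits as $\gamma(D_1) + \gamma(D_2)$. First I would fix a refinement $D^* = [a_i(m_i); b_j(n_j)]$ of $D$ and partition the points appearing in $D^*$ into those lying in $X_1' \setminus \{x\}$, those lying in $X_2' \setminus \{x\}$, and possibly the glue-point $x$ itself. Then $\gamma(D)$, being a sum of signed terms $m_i n_j\, d(a_i,b_j)$, $-m_{i_1}m_{i_2}\,d(a_{i_1},a_{i_2})$, and $-n_{j_1}n_{j_2}\,d(b_{j_1},b_{j_2})$, naturally breaks into three kinds of contributions: pairs with both points in (the closure of) $X_1'$, pairs with both in $X_2'$, and \emph{cross pairs} with one point $y$ in $X_1' \setminus \{x\}$ and the other point $z$ in $X_2' \setminus \{x\}$. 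For the cross pairs, Definition~\ref{additive}(iv) gives $d(y,z) = d(y,x) + d(x,z)$, so each such term splits into a piece involving only $d(y,x)$ (a distance internal to $X_1'$, with $z$'s weight reassigned to $x$) and a piece involving only $d(x,z)$ (internal to $X_2'$).

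The key bookkeeping step is then to check that, once every cross-pair distance is expanded this way, the collected terms involving only distances within $X_1'$ are exactly the terms of $\gamma(D_1)$ computed from the simplex $D_1$ of Definition~\ref{components} — where each point of $X_2'$ has been collapsed to $x$ with its weight and team preserved — and symmetrically for $\gamma(D_2)$. Concretely, in $D_1$ the glue-point $x$ carries total $a$-team weight $M := \sum_{a_i \in X_2'} m_i$ (plus whatever weight $x$ already had in $D$) and total $b$-team weight $N := \sum_{b_j \in X_2'} n_j$; one verifies that the coefficient of each distance $d(y,x)^{\phantom{p}}$ for $y \in X_1'$ in $\gamma(D_1)$ matches the coefficient produced by expanding the cross terms of $\gamma(D)$ together with the original $X_1'$-internal terms, and that the purely $x$-at-$x$ contributions vanish since $d(x,x)=0$. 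Using Definition~\ref{LR}, it is cleanest to show separately that $\fR(D) = \fR(D_1) + \fR(D_2) - (\text{terms that cancel})$ and $\fL(D) = \fL(D_1) + \fL(D_2) - (\text{same terms})$, so the correction terms cancel in $\gamma = \fR - \fL$; alternatively one notes that $D_1$ and $D_2$ are not required to be refinements, so degenerate-looking terms like $x$ versus $x$ may be kept and simply evaluate to zero.

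One subtlety to handle explicitly: the weight-balance condition $\sum m_i = \sum n_j$ for $D$ does \emph{not} force the analogous condition for $D_1$ or $D_2$ individually as $(s,t)$-simplices in the abstract sense — but it does, because moving a point to $x$ preserves both its weight and its team, so the $a$-team total and $b$-team total of $D_1$ equal those of $D$, and likewise for $D_2$. Thus $D_1$ and $D_2$ are genuine simplices in the sense of Definition~\ref{simplex}, and $\gamma(D_1), \gamma(D_2)$ are well-defined. I expect the main obstacle to be purely organizational rather than conceptual: carefully tracking the coefficient of each surviving distance through the expansion of the cross terms without double-counting, especially the interaction between points that were already equal to $x$ in $D$ and points newly moved to $x$. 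Since the $p=1$ case uses $d$ linearly and the hypothesis (iv) is itself an additive (not $p$-additive) identity, no convexity or exponent manipulation is needed here; the whole argument is a finite rearrangement of a finite sum, which is why this lemma is the natural foundation before passing to general $p$.
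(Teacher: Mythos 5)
Your proposal is correct and follows essentially the same route as the paper: split $\fR$ and $\fL$ according to which component each point lies in, expand the cross terms via $d(y,z)=d(y,x)+d(x,z)$, and observe that the regrouped sums are exactly $\fR(D_1)+\fR(D_2)$ and $\fL(D_1)+\fL(D_2)$, with the apparent extra terms all being multiples of $d(x,x)=0$. The paper does exactly this bookkeeping for $\fR$ (after relabeling so that $a_1,\ldots,a_k\in X_1$, etc.) and omits the analogous computation for $\fL$, so no further comparison is needed.
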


\begin{proof}
It suffices to show that $\fR(D) = \fR(D_1)+\fR(D_2)$ and $\fL(D) = \fL(D_1) + \fL(D_2)$. Relabeling if necessary, suppose that $D$ is such that $a_1, \ldots, a_k \in X_1$, $a_{k+1},\ldots, a_s \in X_2$ and $b_1, \ldots, b_l \in X_1$, $b_{l+1},\ldots, b_t \in X_2$. Then we have
\begin{align*}
\fR(D) & = \sum_{i=1}^s \sum_{j=1}^t m_i n_j d(a_i,b_j)\\
& = \sum_{i=1}^k \sum_{j=1}^l m_i n_j d(a_i,b_j) + \sum_{i=1}^k \sum_{j=l+1}^t m_i n_j d(a_i,b_j) \\
&\quad + \sum_{i=k+1}^s \sum_{j=1}^l m_i n_j d(a_i,b_j) +\sum_{i=k+1}^s \sum_{j=l+1}^t m_i n_j d(a_i,b_j)\\
& =  \sum_{i=1}^k \sum_{j=1}^l m_i n_j d_1(a_i,b_j) + \sum_{i=1}^k \sum_{j=l+1}^t m_i n_j \bra{ d_1(a_i,x) + d_2(x,b_j) } \\
&\quad + \sum_{i=k+1}^s \sum_{j=1}^l m_i n_j \bra{ d_2(a_i,x) + d_1(x,b_j) } +\sum_{i=k+1}^s \sum_{j=l+1}^t m_i n_j d_2(a_i,b_j)\\
& = \sum_{i=1}^k \sum_{j=1}^l m_i n_j d_1(a_i,b_j) + \sum_{i=1}^k \sum_{j=l+1}^t m_i n_j d_1(a_i,x)\\
& \quad +\sum_{i=k+1}^s \sum_{j=1}^l m_i n_j d_1(x,b_j) + \sum_{i=k+1}^s \sum_{j=l+1}^t m_i n_j d_1(x,x)\\
& \quad + \sum_{i=1}^k \sum_{j=1}^l m_i n_j d_2(x,x) + \sum_{i=1}^k \sum_{j=l+1}^t m_i n_j d_2(x,b_j)\\
& \quad + \sum_{i=k+1}^s \sum_{j=1}^l m_i n_j d_2(a_i,x) + \sum_{i=k+1}^s \sum_{j=l+1}^t m_i n_j d_2(a_i,b_j)\\
& = \fR(D_1) + \fR(D_2)
\end{align*}
The proof of $\fL(D) = \fL(D_1) + \fL(D_2)$ is similar, and is omitted.

So we have
\begin{align*}
 \gamma(D) & = \fR(D)-\fL(D)\\
& = \fR(D_1) + \fR(D_2) - \fL(D_1) - \fL(D_2)\\
& = \gamma(D_1) + \gamma(D_2).
\end{align*}

\end{proof}

The component simplices need not be normalized. However, we do have some control over their weights.

\begin{Lem}\label{weightinequality}
Let $(X,d)$ be an additive combination of $(X_1,d_1)$ and $(X_2,d_2)$. Let $D$ be a non-degenerate simplex in $X$ with weight $\lambda$. If $D_1,D_2$ are the components of $D$ with weights $\lambda_1$ and $\lambda_2$ respectively, then
\[
\lambda_1 + \lambda_2 \geq \lambda.
\]
\end{Lem}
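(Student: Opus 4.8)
The plan is to compute the three weights $\lambda$, $\lambda_1$, $\lambda_2$ from one explicit formula and then reduce the claimed inequality to the triangle inequality on $\RR$. The key tool is the following description of the weight of a non-degenerate simplex. Given a simplex $D=[a_i(m_i);b_j(n_j)]$ and a point $z$, let $M_z=\sum_{i:a_i=z}m_i$ and $N_z=\sum_{j:b_j=z}n_j$ denote the total weight carried by $z$ on the $a$-team and on the $b$-team, so that $\sum_z M_z=\sum_z N_z$ (the ``unrefined'' weight of $D$). Applying procedures (ii)--(iv) of Definition~\ref{cancel} repeatedly --- merging repeated points within a team, cancelling a point against its opposite-team occurrence, and deleting zero weights --- produces a refinement of $D$ in which each point $z$ occurs at most once, carrying weight $|M_z-N_z|$ on the $a$-team if $M_z>N_z$ and on the $b$-team if $N_z>M_z$. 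The two teams of this refinement then have equal total weight $\tfrac12\sum_z|M_z-N_z|$, so that $\lambda=\tfrac12\sum_z|M_z-N_z|$. Note that relabelling some points of a simplex as others does not change $\sum_z M_z$; hence $D$, $D_1$ and $D_2$ all share the same unrefined weight, and any loss of refined weight happens only through cross-team cancellation.

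Next I would apply this formula to $D$, $D_1$, $D_2$. Partition the points occurring in $D$ into those in $X_1'\setminus\{x\}$, those in $X_2'\setminus\{x\}$, and the glue-point $x$. For $k=1,2$ set $A_k=\sum_{z\in X_k'\setminus\{x\}}M_z$, $B_k=\sum_{z\in X_k'\setminus\{x\}}N_z$, $S_k=\sum_{z\in X_k'\setminus\{x\}}|M_z-N_z|$, and $A_x=M_x$, $B_x=N_x$; then $A_x+A_1+A_2=B_x+B_1+B_2$, both equal to the unrefined weight of $D$. The formula above gives $2\lambda=S_1+S_2+|A_x-B_x|$. By Definition~\ref{components}, $D_1$ is $D$ with every point of $X_2'\setminus\{x\}$ relabelled as $x$ and everything in $X_1'$ left alone, so in $D_1$ the point $x$ carries $a$-weight $A_x+A_2$ and $b$-weight $B_x+B_2$ while every $z\in X_1'\setminus\{x\}$ keeps its weights; hence $2\lambda_1=S_1+|(A_x+A_2)-(B_x+B_2)|$, and symmetrically $2\lambda_2=S_2+|(A_x+A_1)-(B_x+B_1)|$.

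Subtracting, the $S_1$ and $S_2$ terms cancel and
\[
2(\lambda_1+\lambda_2-\lambda)=\bigl|(A_x+A_2)-(B_x+B_2)\bigr|+\bigl|(A_x+A_1)-(B_x+B_1)\bigr|-|A_x-B_x|.
\]
Writing $u=A_x-B_x$, $v=A_1-B_1$, $w=A_2-B_2$, the identity $A_x+A_1+A_2=B_x+B_1+B_2$ gives $u+v+w=0$, so $(A_x+A_2)-(B_x+B_2)=-v$, $(A_x+A_1)-(B_x+B_1)=-w$ and $A_x-B_x=-(v+w)$. Therefore $2(\lambda_1+\lambda_2-\lambda)=|v|+|w|-|v+w|\ge 0$ by the triangle inequality, which is precisely $\lambda_1+\lambda_2\ge\lambda$.

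I expect the only real obstacle to be the weight bookkeeping: one must remember that $D_1$ and $D_2$ as defined are generally not refinements (they carry many copies of $x$), so their weights have to be extracted through a refinement, and one must observe that all the loss of weight is concentrated at the single glue-point $x$ --- once that is isolated, the triangle inequality finishes the argument.
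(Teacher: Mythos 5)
Your argument is correct, and it takes a genuinely different route from the paper's. You first establish an exact formula for the weight of a non-degenerate simplex, $\lambda=\tfrac12\sum_z\abs{M_z-N_z}$, by tracking how procedures (ii)--(iv) of Definition~\ref{cancel} act on the aggregated team weights $M_z,N_z$ at each point; this is a correct and worthwhile observation (and it consistently returns $0$ for degenerate simplices, so no separate case is needed if $D_1$ or $D_2$ happens to be degenerate). Applying it to $D$, $D_1$, $D_2$ and using the balance condition $A_x+A_1+A_2=B_x+B_1+B_2$ reduces the claim to $\abs{v}+\abs{w}\ge\abs{v+w}$, and in fact yields the exact deficit $2(\lambda_1+\lambda_2-\lambda)=\abs{v}+\abs{w}-\abs{v+w}$, so you also learn precisely when equality holds, namely when the net imbalances $v$ and $w$ pushed onto the glue-point by the two sides have the same sign. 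The paper instead works with a refinement $D^*$ of $D$, splits into the two cases that $x$ is not on the $a$-team of $D^*$ or not on the $b$-team of $D^*$ (at least one of which must hold since $D^*$ is refined), and in each case bounds $\lambda_1$ and $\lambda_2$ from below by the portions of the relevant team weight supported on $X_1$ and on $X_2$, using the observation that cancellation in forming the components can only occur at $x$. Both proofs rest on the same geometric fact --- weight can only be lost at the glue-point --- but the paper's is shorter, while yours is sharper, producing an identity rather than just an inequality.
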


\begin{proof}
Since we are interested in the weights of our simplices, it is easiest to work with refined simplices. Let $D^*, D_1^*, D_2^*$ be refinements of $D, D_1, D_2$ respectively. 

Let $x$ be the glue-point of $(X_1,d_1)$ and $(X_2,d_2)$. Suppose $x \neq a_i$ for any $a_i$ in $D^*$. The glue-point $x$ is the only point in $X_1$ whose role in the simplex $D_1^*$ may be different to its role in $D^*$. So we have
\[
 \lambda_1 \geq \sum_{i : a_i \in X_1} m_i.
\]
Similarly, none of the points in $X_2$ that belong to the $a$-team have their weights diminished when forming $D_2^*$, so
\[
 \lambda_2 \geq \sum_{i: a_i \in X_2} m_i.
\]
As $x \neq a_i$ for any $i$, we have covered all of the members of the $a$-team, and so
\[
 \lambda_1 + \lambda_2 \geq \sum_{i:a_i \in X_1} m_i + \sum_{i:a_i \in X_2} m_i = \sum_{i:a_i \in X} m_i = \lambda.
\]
An analogous argument shows that $\lambda_1 + \lambda_2 \geq \lambda$ if $x \neq b_j$ for any $b_j$ in $D^*$. As we are working with a refined simplex $D^*$, this covers all possible cases. 
\end{proof}

Note that the above Lemma cannot be strengthened to $\lambda_1 + \lambda_2 = \lambda$, as shown by the following example.

\begin{Exam}
 Recall again the space $(G,d)$, and consider the normalized simplex 
\[
D=[v_5(0.4),v_7(0.6);v_2(0.6),v_8(0.4)].
\]
Then the refined component simplices are seen to be
\[
 D_1^* = [x(0.2),v_5(0.4);v_2(0.2)] \quad \text{and} \quad D_2^* = [v_7(0.6);x(0.2),v_8(0.4)],
\]
with weights $\lambda_1 = 0.6$ and $\lambda_2 =0.6$ respectively. So we have $\lambda_1 + \lambda_2 = 1.2 > 1 = \lambda$.
\end{Exam}

A straightforward inductive argument gives the following.

\begin{Cor}\label{multiweights}
 Let $(X,d)$ be an additive combination of $(X_1,d_1), \ldots, (X_n,d_n)$. Let $D$ be a non-degenerate simplex in $X$ with weight $\lambda$. If $D_1, \ldots, D_n$ are the components of $D$ with weights $\lambda_1, \ldots, \lambda_n$ respectively, then
 \[
  \lambda_1 + \cdots + \lambda_n \geq \lambda.
 \]
\end{Cor}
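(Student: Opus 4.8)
The plan is to argue by induction on the number $n$ of components. The base case $n=1$ is trivial, since then $D_1 = D$ and $\lambda_1 = \lambda$; and the case $n = 2$ is exactly Lemma~\ref{weightinequality}, which is already established. So the real content is purely the inductive step, and the work is essentially bookkeeping about how Definition~\ref{multiplecomponents} iterates Definition~\ref{components}.

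For the inductive step, assume the statement holds for additive combinations of fewer than $n$ spaces, and let $(X,d)$ be an additive combination of $(X_1,d_1),\ldots,(X_n,d_n)$ with non-degenerate simplex $D$ of weight $\lambda$. By Definition~\ref{additivemulti} we may choose an ordering in which $(X_n,d_n)$ is the last space glued on; thus $(X,d)$ is an additive combination of a space $(X',d')$ — itself an additive combination of $(X_1,d_1),\ldots,(X_{n-1},d_{n-1})$ — and $(X_n,d_n)$. Applying Definition~\ref{components} to this two-fold decomposition splits $D$ into components $D'$ in $X'$ and $D_n$ in $X_n$, with weights $\lambda'$ and $\lambda_n$, and Lemma~\ref{weightinequality} gives $\lambda' + \lambda_n \geq \lambda$. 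By the recursive construction in Definition~\ref{multiplecomponents}, the components of $D$ in $X$ associated to $X_1,\ldots,X_{n-1}$ are precisely the components of $D'$ in $X'$ (this is exactly how Definition~\ref{multiplecomponents} is set up, peeling off one space at a time).

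It then remains to invoke the inductive hypothesis. If $D'$ is non-degenerate, the inductive hypothesis applied to $D'$ in $X'$ gives $\lambda_1 + \cdots + \lambda_{n-1} \geq \lambda'$, and adding $\lambda_n$ to both sides together with $\lambda' + \lambda_n \geq \lambda$ completes the step. If instead $D'$ is degenerate — which by the remark following Definition~\ref{components} means $D$ is essentially supported in $X_n$ — then $\lambda' = 0$, and each of $D_1,\ldots,D_{n-1}$, being obtained by further splitting a degenerate simplex, is again degenerate with weight $0$; hence $\lambda_1 + \cdots + \lambda_n = \lambda_n \geq \lambda$ directly, since $\lambda' = 0$ in $\lambda' + \lambda_n \geq \lambda$.

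I do not expect a genuine obstacle: the only points needing care are the verification that the iterated splitting of Definition~\ref{multiplecomponents} is consistent with removing one component at a time (so the first $n-1$ components of $D$ in $X$ match the components of $D'$ in $X'$), and the minor degenerate edge case handled above. Both follow immediately from the definitions, so the proof should be short.
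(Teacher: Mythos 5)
Your proof is correct and follows exactly the route the paper intends: the paper gives no written proof, stating only that ``a straightforward inductive argument'' from Lemma~\ref{weightinequality} yields the corollary, and your induction on $n$ (peeling off one component at a time as in Definition~\ref{multiplecomponents} and applying Lemma~\ref{weightinequality} at each stage) is precisely that argument. Your handling of the degenerate edge case, using the convention that a degenerate simplex has weight zero, is a sensible bit of extra care that the paper leaves implicit.
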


We now have enough information to prove the $p=1$ case of Theorem~\ref{main}.

\begin{Thm}\label{pequals1case}
Let $(X,d)$ be an additive combination of $(X_1,d_1),\ldots, (X_n,d_n)$. 
\begin{enumerate}[(i)]
 \item If $(X_1,d_1),\ldots,(X_n,d_n)$ all have $1$-negative type, then so does $(X,d)$.
 \item If $(X_1,d_1),\ldots,(X_n,d_n)$ all have strict $1$-negative type, then so does $(X,d)$.
 \item If $\Gamma^1_{X_1},\ldots,\Gamma^1_{X_n} > 0$, then $\Gamma^1_X >0$ and is given by
\[
\Gamma^1_X = \bra{ \sum_{i=1}^n \bra{\Gamma^1_{X_i}}^{-1} }^{-1}.
\]
\end{enumerate}
\end{Thm}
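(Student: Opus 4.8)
The plan is to prove all three parts together by reducing everything to the additive combination of two spaces and then inducting on $n$, leaning on Lemma~\ref{gamma}, Lemma~\ref{weightinequality}, and the equivalence between generalized roundness $p$ (with $p=1$) and $1$-negative type. First I would handle the base case $n=2$, writing $(X,d)$ as an additive combination of $(X_1,d_1)$ and $(X_2,d_2)$ with glue-point $x$. Given any non-degenerate simplex $D$ in $X$, split it into its components $D_1,D_2$ via Definition~\ref{components}; by Lemma~\ref{gamma} we have $\gamma(D)=\gamma(D_1)+\gamma(D_2)$. For part (i): if $X_1,X_2$ have $1$-negative type, then each has generalized roundness $1$, so $\gamma(D_1)\ge 0$ and $\gamma(D_2)\ge 0$ (degenerate components contribute $0$), hence $\gamma(D)\ge 0$, giving $(X,d)$ generalized roundness $1$, i.e. $1$-negative type. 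Part (ii) is the same argument with strict inequalities: since $D$ is non-degenerate, its refinement $D^*$ has its points distributed over $X_1'\cup X_2'$, and at least one of $D_1,D_2$ is non-degenerate; for that one strict generalized roundness $1$ gives $\gamma(D_i)>0$, while the other is $\ge 0$, so $\gamma(D)>0$.

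For part (iii) with $n=2$, set $\Gamma_1=\Gamma^1_{X_1}$, $\Gamma_2=\Gamma^1_{X_2}$, both positive. I first establish the lower bound $\Gamma^1_X \ge (\Gamma_1^{-1}+\Gamma_2^{-1})^{-1}$. Take an arbitrary normalized simplex $D$ in $X$ (weight $\lambda=1$), with components $D_1,D_2$ of weights $\lambda_1,\lambda_2$. Using the scaling identity \eqref{scaling}/\eqref{typegap2}, $\gamma(D_i)\ge \Gamma_i \lambda_i^2$ when $D_i$ is non-degenerate (and this also holds trivially when $\lambda_i=0$). Hence by Lemma~\ref{gamma},
\[
\gamma(D)=\gamma(D_1)+\gamma(D_2)\ge \Gamma_1\lambda_1^2+\Gamma_2\lambda_2^2.
\]
By Lemma~\ref{weightinequality}, $\lambda_1+\lambda_2\ge 1$, so it suffices to minimize $\Gamma_1 u^2+\Gamma_2 v^2$ subject to $u+v\ge 1$, $u,v\ge 0$; a Lagrange-multiplier (or Cauchy--Schwarz) computation gives minimum $(\Gamma_1^{-1}+\Gamma_2^{-1})^{-1}$, attained at $u=\Gamma_1^{-1}(\Gamma_1^{-1}+\Gamma_2^{-1})^{-1}$, and similarly for $v$. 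Therefore $\gamma(D)\ge (\Gamma_1^{-1}+\Gamma_2^{-1})^{-1}$ for every normalized $D$, so $\Gamma^1_X\ge (\Gamma_1^{-1}+\Gamma_2^{-1})^{-1}>0$.

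For the reverse inequality I would construct, for any $\epsilon>0$, a normalized simplex $D$ in $X$ with $\gamma(D)$ close to $(\Gamma_1^{-1}+\Gamma_2^{-1})^{-1}$. Pick near-extremal normalized simplices $E_1$ in $X_1$ and $E_2$ in $X_2$ with $\gamma(E_i)<\Gamma_i+\epsilon$; rescale $E_i$ to weight $\mu_i:=\Gamma_i^{-1}(\Gamma_1^{-1}+\Gamma_2^{-1})^{-1}$ (so $\mu_1+\mu_2=1$), and then \emph{merge} them at the glue-point $x$ — concretely, realize $E_1$ inside $X_1'$ and $E_2$ inside $X_2'$, translating any occurrence of the basepoints to $x$, and combine same-team occurrences of $x$ by cancellation (Definition~\ref{cancel}), so that the resulting simplex $D$ in $X$ has the $\mu_i$-scaled $E_i$ as its components. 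Lemma~\ref{gamma} then yields $\gamma(D)=\mu_1^2\gamma(E_1)/\,(\text{weight of }E_1)^2+\cdots$; arranging so that the combined weight is exactly $1$, we get $\gamma(D)\le \Gamma_1\mu_1^2+\Gamma_2\mu_2^2+O(\epsilon)=(\Gamma_1^{-1}+\Gamma_2^{-1})^{-1}+O(\epsilon)$. Letting $\epsilon\to 0$ gives $\Gamma^1_X\le (\Gamma_1^{-1}+\Gamma_2^{-1})^{-1}$. (If $X_1$ or $X_2$ is finite an extremal simplex exists and no limiting argument is needed; otherwise one needs the components to genuinely be glued so that the weight bound in Lemma~\ref{weightinequality} is tight here, which is exactly what the construction arranges.) Finally, the general $n$ case follows by induction: writing $(X,d)$ as an additive combination of an additive combination $(X',d')$ of $(X_1,\dots,X_{n-1})$ with $(X_n,d_n)$, parts (i),(ii) are immediate from the $n=2$ case, and for (iii), $\Gamma^1_{X'}=(\sum_{i=1}^{n-1}(\Gamma^1_{X_i})^{-1})^{-1}$ by the inductive hypothesis, so $\Gamma^1_X=((\Gamma^1_{X'})^{-1}+(\Gamma^1_{X_n})^{-1})^{-1}=(\sum_{i=1}^n(\Gamma^1_{X_i})^{-1})^{-1}$; one should check the components of a simplex in $X$ relative to the two-stage decomposition agree with the components $D_1,\dots,D_n$ of Definition~\ref{multiplecomponents}, which is precisely the content of that definition's well-definedness remark.

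The main obstacle is the reverse inequality in part (iii): proving $\Gamma^1_X\le(\sum(\Gamma^1_{X_i})^{-1})^{-1}$ requires actually exhibiting a (near-)extremal simplex in $X$ built from near-extremal simplices in the components, and one must be careful that when the component simplices are glued at $x$ the weights add up correctly (so that Lemma~\ref{weightinequality} holds with equality on the constructed example) and that the cancellation operations of Definition~\ref{cancel} do not change $\gamma$ — the latter is guaranteed by the lemma following Definition~\ref{cancel}, but bookkeeping the normalization is the delicate part. The forward (lower-bound) direction, by contrast, is a clean consequence of Lemma~\ref{gamma}, Lemma~\ref{weightinequality}, and an elementary optimization.
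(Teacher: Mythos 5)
Your proposal is correct and follows essentially the same route as the paper: split a simplex into components via Definition~\ref{components}, use Lemma~\ref{gamma} and Lemma~\ref{weightinequality} for parts (i), (ii) and the lower bound in (iii) via the quadratic minimization over $\lambda_1+\lambda_2=1$, then realize the bound by gluing (near-)extremal component simplices at $x$ with weights $\Gamma_2/(\Gamma_1+\Gamma_2)$ and $\Gamma_1/(\Gamma_1+\Gamma_2)$, and finish by induction on $n$. The one bookkeeping point you flag as delicate is handled in the paper by assuming WLOG that the glue-point lies in the $a$-team (or is absent) in both extremal simplices, so no cross-team cancellation at $x$ occurs and the merged simplex is genuinely normalized.
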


\begin{proof}
Suppose $(X,d)$ is an additive combination of $(X_1, d_1), \ldots, (X_n,d_n)$. 

Parts (i) and (ii) follow from Lemma~\ref{gamma}. An inductive argument gives
\[
 \gamma(D) = \sum_{i=1}^n \gamma(D_i)
\]
for any normalized simplex $D$ in $X$. If $(X_1,d_2), \ldots, (X_n,d_n)$ all have 1-negative type, then $\gamma(D_1),\ldots,\gamma(D_n)\geq0$. So $\gamma(D)\geq 0$, and we conclude that $(X,d)$ also has 1-negative type, proving (i). If $(X_1,d_1), \ldots, (X_n,d_n)$ all have strict 1-negative type, and $D$ is a normalized simplex in $(X,d)$ then by Corollary~\ref{multiweights} at least one of the components $D_1,\ldots, D_n$, say $D_k$, has non-zero weight and so is non-degenerate. Since $(X_k,d_k)$ has strict 1-negative type, we have $\gamma(D_k) >0$. Thus
\[
 \gamma(D) = \sum_{i=1}^n \gamma(D_i) \geq \gamma(D_k) >0,
\]
which shows that $(X,d)$ also has strict 1-negative type, proving (ii).

For (iii) the proof also proceeds via induction. The base case of joining two spaces $(X_1,d_1)$ and $(X_2,d_2)$ takes some work. The inductive step is essentially the base case again, and so does not require much more work.

Let $(X,d)$ be an additive combination of $(X_1,d_1)$ and $(X_2,d_2)$. Let $D$ be a normalized simplex in $(X,d)$ and $D_1, D_2$ be its components, with corresponding weights $\lambda_1, \lambda_2$. Note by Lemma~\ref{weightinequality} we have $\lambda_1+\lambda_2 \geq 1$. By Lemma~\ref{gamma} we have
\[
 \gamma(D) = \gamma(D_1) + \gamma(D_2).
\]
The component simplices $D_1$ and $D_2$ are not necessarily normalized. Let $D_1'$ and $D_2'$ be the normalized versions of $D_1$ and $D_2$ respectively. That is, the same points but with the weights scaled so as to be normalized but keeping the same ratios of weights. As $D_1$ and $D_2$ have weights $\lambda_1$, $\lambda_2$ respectively, this means that all the weights in $D_1$ are simply those in $D_1'$ multiplied by $\lambda_1$, and all the weights in $D_2$ are simply those in $D_2'$ multiplied by $\lambda_2$. We have
\begin{align*}
 \Gamma^1_X & = \inf \seq{ \gamma(D) : D \text{ is a normalized simplex in } X}\\
& = \inf \seq{ \gamma(D_1) + \gamma(D_2) : D \text{ is a normalized simplex in } X}\\
& = \inf \seq{ \lambda_1^2 \gamma(D_1') + \lambda_2^2 \gamma(D_2') : D \text{ is a normalized simplex in } X} \tag{by \eqref{scaling}}\\
& \geq \inf \seq{ \lambda_1^2 \gamma(E) + \lambda_2^2 \gamma(F) : E, F \text{ are normalized simplices in } X_1,X_2 \text{ and } \lambda_1 + \lambda_2 \geq 1}\\
& = \inf \seq{ \lambda_1^2 \Gamma^1_{X_1} + \lambda_2^2 \Gamma^1_{X_2} : \lambda_1 + \lambda_2 \geq 1 }\\
& = \inf \seq{ \lambda_1^2 \Gamma^1_{X_1} + \lambda_2^2 \Gamma^1_{X_2} : \lambda_1 + \lambda_2 = 1 }.
\end{align*}
This last infimum can be computed directly. We see that it is actually a minimum, with value
\begin{equation}\label{minimum}
\bra{\bra{\Gamma^1_{X_1}}^{-1} + \bra{ \Gamma^1_{X_2} }^{-1} }^{-1},
\end{equation}
which occurs when
\[
\lambda_1 = \frac{\Gamma^1_{X_2}}{ \Gamma^1_{X_1} + \Gamma^1_{X_2} } \quad \mbox{and} \quad \lambda_2 = \frac{\Gamma^1_{X_1}}{ \Gamma^1_{X_1} + \Gamma^1_{X_2} }
\]
Thus
\[
 \Gamma^1_{X} \geq \bra{\bra{\Gamma^1_{X_1}}^{-1} + \bra{ \Gamma^1_{X_2} }^{-1} }^{-1}.
\]

Next we show that the value \eqref{minimum} is also an upper bound for $\Gamma^1_X$. If $(X,d)$ has finitely many points, then we may produce a normalized simplex $D$ in $X$ such that $\gamma(D)$ is equal to the expression in \eqref{minimum}. The case of $(X,d)$ having infinitely many points may be dealt with via the finite case and a suitable limiting argument. 

Suppose that $(X,d)$ is a finite metric space. Then $(X_1,d_1)$ and $(X_2,d_2)$ are also finite metric spaces. So there exist normalized simplices $D_1'$ in $X_1$ and $D_2'$ in $X_2$ that are extremal in that
\[
 \gamma(D_1') = \Gamma^1_{X_1} \quad \mbox{and} \quad \gamma(D_2')= \Gamma^1_{X_2}.
\]
Without loss of generality, we also assume that the glue-point $x$ does not belong to the $b$-team of either $D_1$ or $D_2$. Define
\[
 \lambda_1 = \frac{\Gamma^1_{X_2}}{\Gamma^1_{X_1} + \Gamma^1_{X_2}} \quad \mbox{and} \quad \lambda_2 = \frac{\Gamma^1_{X_1}}{\Gamma^1_{X_1} + \Gamma^1_{X_2}},
\]
so that $\lambda_1 + \lambda_2 = 1$.

Now, let $D_1$ and $D_2$ denote the weighted simplices formed by multiplying the weights of $D_1'$ by $\lambda_1$ and $D_2'$ by $\lambda_2$. We now construct a normalized simplex $D$ in $X$ such that its components are the $D_1$ and $D_2$ just defined. Construct $D$ as follows:
\begin{enumerate}[(i)]
 \item Firstly, for all points $z \in X - \seq{x}$, use $D_1$ or $D_2$ to determine if $z$ belongs to the $a$-team or the $b$-team, or neither team, and also its weighting. Note that as the points in $D_1$ and $D_2$ are distinct (apart from possibly at $x$), this is well defined.
 \item Secondly, we deal with the glue-point $x$. Let the $a$-team weight of $x$ in $D_1$ be denoted by $m_{D_1}(x)$, with this quantity equal to 0 if $x$ is not a member of the $a$-team in $D_1$. Similarly define $m_{D_2}(x)$. If $m_{D_1}(x) + m_{D_2}(x) >0$, then let $x$ be a member of the $a$-team in $D$ with weight $m_{D}(x) =m_{D_1}(x) + m_{D_2}(x)$. If $m_{D_1}(x) + m_{D_2}(x) =0$, then simply omit $x$ from the simplex $D$.
\end{enumerate}
We claim that the above produces a normalized simplex $D$ in $X$. Indeed, we have
\begin{align*}
 \sum_{j: b_j \in X} n_j & = \sum_{j:b_j \in X_1} n_j + \sum_{j:b_j \in X_2} n_j\\
                      & = \lambda_1 + \lambda_2\\
                      & =1, 
\end{align*}
and
\begin{align*}
 \sum_{i:a_i \in X} m_i & = m_D(x) + \sum_{i: a_i \in X_1 -\seq{x} } m_i + \sum_{ i: a_i \in X_2 -\seq{x}  } m_i\\
& = m_{D_1(x)} + m_{D_2}(x) + \sum_{i: a_i \in X_1 -\seq{x}} m_i + \sum_{i: a_i \in X_2 -\seq{x}} m_i\\
& = \sum_{i: a_i \in X_1} m_i + \sum_{i:a_i \in X_2} m_i\\
& = \lambda_1 + \lambda_2\\
& = 1.
\end{align*}
So $D$ is indeed a normalized simplex in $X$. 

Finally, we see that the constructed normalized simplex $D$ has all the desired properties. By construction, the components of $D$ via Definition~\ref{components} are exactly $D_1$ and $D_2$ defined above, with weights $\lambda_1$ and $\lambda_2$. By Lemma~\ref{gamma} we have
\begin{align*}
 \gamma(D) &= \gamma(D_1) + \gamma(D_2)\\
           &= \lambda_1^2 \gamma(D_1') + \lambda_2^2 \gamma(D_2')\\
           &= \bra{\frac{\Gamma^1_{X_2}}{\Gamma^1_{X_1} + \Gamma^1_{X_2}}}^2 \Gamma^1_{X_1} +\bra{\frac{\Gamma^1_{X_1}}{\Gamma^1_{X_1} + \Gamma^1_{X_2}}}^2 \Gamma^1_{X_2}\\
           &= \frac{ \Gamma^1_{X_1} \Gamma^1_{X_2} }{\Gamma^1_{X_1} + \Gamma^1_{X_2}}\\
           &= \bra{ \bra{\Gamma^1_{X_1}}^{-1} + \bra{\Gamma^1_{X_2}}^{-1} }^{-1}.
\end{align*}
As $\Gamma^1_X \leq \gamma(D)$, this completes the finite case. 

If $(X,d)$ is an infinite metric space, then extremal simplices $D_1'$ and $D_2'$ do not necessarily exist. However, by the definitions of $\Gamma^1_{X_1}$ and $\Gamma^1_{X_2}$, for any $\epsilon >0$ there exist normalized finite simplices $D_1'(\epsilon)$ and $D_2'(\epsilon)$ such that $\gamma \bra{ D_1'(\epsilon) } = \Gamma^1_{X_1} + \epsilon$ and $\gamma \bra{ D_2'(\epsilon) } = \Gamma^1_{X_2} + \epsilon$. Going through the above procedure gives a normalized simplex $D(\epsilon)$ such that
\[
\gamma(D(\epsilon)) =  \bra{ \bra{\Gamma^1_{X_1}+\epsilon}^{-1} + \bra{\Gamma^1_{X_2}+\epsilon}^{-1} }^{-1}.
\]
As $\Gamma^1_X \leq \gamma(D(\epsilon))$, taking $\epsilon$ to $0$ gives the result. This concludes the proof of the base case for our induction.

Now suppose $(X,d)$ is an additive combination of $(X_1,d_1),\ldots,(X_{k+1},d_{k+1})$. Then $(X,d)$ can be formed by joining the $k+1$ spaces successively, each time forming an additive combination. So, relabeling if necessary, we can consider $(X,d)$ as an additive combination of $(Y,\delta)$ and $(X_{k+1},d_{k+1})$, where $(Y,\delta)$ is an an additive combination of $(X_1,d_1),\ldots,(X_k,d_k)$. But this is simply an additive combination of two spaces, and so by the base case
\begin{equation}\label{basecase}
\Gamma^1_X = \bra{ \bra{ \Gamma^1_Y}^{-1} + \bra{ \Gamma^1_{X_{k+1}} }^{-1} }^{-1}.
\end{equation}
But by the inductive hypothesis
\[
\Gamma^1_{Y} = \bra{ \sum_{i=1}^k \bra{\Gamma^1_{X_i}}^{-1} }^{-1}.
\]
Hence equation~\eqref{basecase} simplifies to
\[
\Gamma^1_X = \bra{ \sum_{i=1}^{k+1} \bra{\Gamma^1_{X_i}}^{-1} }^{-1}.
\]
So by mathematical induction we are done. Thus the theorem is proved for the case $p=1$.
\end{proof}

\begin{Rem}
Theorem~\ref{pequals1case} extends some previous work in several directions. In particular, the result in \cite{HLMT98} that all finite metric trees have strict 1-negative type follows from Theorem~\ref{main} part (ii), as all metric trees can be thought of additive combinations of their edges, each having strict 1-negative type. In \cite{DW08a}, Doust and Weston extended some of the work done in \cite{HLMT98} by finding an alternative proof that all finite metric trees have strict 1-negative type, and calculating the 1-negative type gap for finite weighted metric trees as
\begin{equation}\label{treeformula}
  \Gamma^1_T = \seq{ \sum_{ e \in E(T) } \abs{e}^{-1} }^{-1},
\end{equation}
where $E(T)$ denotes the set of edges in $T$ and $\abs{e}$ the length of edge $e$. This follows directly from Theorem~\ref{main} part (iii), by noting that each finite weighted metric tree can be formed as the additive combination of its edges, each two-point metric spaces. Each edge $e$ has $\Gamma^1_e = \abs{e}$, and so the formula \eqref{treeformula} can be seen as a special case of part (iii).
 
\end{Rem}

\section{The General Case}

Now that the $p=1$ case has been established in the previous section, we are able to extend to the full proof of Theorem~\ref{main} without much more work. We first recall facts about scaling metric spaces.

\begin{Defn}
 If $(X,d)$ is a metric space and $c>0$, then $(X,d^c)$ is the (semi)-metric space on the set $X$ with distance defined by $d^c(x,y) = d(x,y)^c$ for $x,y \in X$. We may use the abbreviation $X^c$ for $(X,d^c)$. \end{Defn}

Note that if $(X,d)$ is a metric space and $0 < c \leq 1$, then $X^c$ is also a metric space. If $c > 1$ then the triangle inequality may fail to hold in $X^c$, in which case $X^c$ is only a semi-metric space. The definitions of $p$-negative and generalized roundness $p$ extend naturally to semi-metric spaces, since the triangle inequality is not used in any way. From now on we will not distinguish between metric and semi-metric spaces, simply referring to a ``space $(X,d)$''.
 
The $p$-negative type properties of $X^c$ follow directly from the $p$-negative type properties of $X$. Indeed, we can see that if $(X,d)$ has (strict) $q$-negative type, then $(X,d^c)$ has (strict) $\bra{\frac{q}{c}}$-negative type. It therefore follows that if $(X,d)$ has finite supremal $p$-negative type and $c>0$, then
\[
 \wp(X,d^c) = \frac{1}{c} \wp(X,d).
\]
Additionally, we can easily see by referring to Definition~\ref{typegap}, that for $q,c >0$ we have
\begin{equation}\label{typegapidentity}
 \Gamma^q_{X^c} = \Gamma^{qc}_{X} = \Gamma^1_{X^{qc}},
\end{equation}
as both $q$ and $c$ appear in the exponent of $d$.
 
With these facts about scaled metric spaces, we may define $p$-additive combinations.
 
\begin{Defn}\label{padditive}
Let $p>0$. We say that a space $(X,d)$ is a \emph{ $p$-additive combination } of spaces $(X_1,d_1), \ldots, (X_n,d_n)$ if we may view $(X,d^p)$ as an additive combination of $(X_1,d^p_1), \ldots, (X_n,d_n^p)$. 
\end{Defn}
 
Intuitively, we form $(X,d)$ from the $(X_1, d_1), \ldots (X_n,d_n)$ by first deforming the component spaces by raising their metric to the exponent $p$, then additively combining them to form $(X,d^p)$, and then obtain $(X,d)$ by deforming again, this time by raising the new metric to the exponent $1 /p$. This deformation and then reverse deformation means that the spaces $(X_1,d_1),\ldots, (X_n,d_n)$ are all isometrically embedded in $(X,d)$, so we can genuinely think of them as pieces of $(X,d)$. The only difference between this and an additive combination of $(X_1,d_1), \ldots, (X_n,d_n)$ is how we join the metrics together. It is also clear that a $1$-additive combination is the same as an additive combination, so Theorem~\ref{pequals1case} is truly a subcase of Theorem~\ref{main}.

With the above definitions we are able to complete our proof of Theorem~\ref{main}.

\begin{proof}[Proof of Theorem~\ref{main}]
 Let $(X,d)$ be a $p$-additive combination of $(X_1,d_1), \ldots, (X_n,d_n)$. Then by Definition~\ref{padditive}, $X^p$ is an additive combination of $X^p_1, \ldots, X^p_n$. If $X_1, \ldots, X_n$ all have $p$-negative type, then $X^p_1, \ldots, X^p_n$ all have 1-negative type. By Theorem~\ref{pequals1case} part (i) we conclude that $X^p$ also has 1-negative type, so $X$ has $p$-negative type. This gives part (i). A clearly similar argument also gives the strict $p$-negative type case, giving part (ii).
 
 For part (iii), we note that if $\Gamma^p_{X_1}, \ldots, \Gamma^p_{X_n} > 0$ then by \eqref{typegapidentity} we have $\Gamma^1_{X^p_1}, \ldots, \Gamma^1_{X^p_1} > 0$. Since $X^p$ is an additive combination of $X^p_1, \ldots, X^p_n$, we conclude by Theorem~\ref{pequals1case} part (iii) that
 \[
  \Gamma^1_{X^p} = \bra{ \sum_{i=1}^n  \bra{ \Gamma^1_{X^p_i} }^{-1}  }^{-1}.
 \]
Using \eqref{typegapidentity} again, we see that this is the same as
\[
   \Gamma^p_{X} = \bra{ \sum_{i=1}^n  \bra{ \Gamma^p_{X_i} }^{-1}  }^{-1},
\]
as required.
\end{proof}

We now give an example of Theorem~\ref{main} in action, on a relatively small space. 
\begin{Exam}\label{examplelower}
 Consider again $(G,d)$ from Example~\ref{G}, an additive combination of $(G_1,d_1)$ and $(G_2,d_2).$
\begin{center}
\begin{tikzpicture}[scale=1]
 \node (G) at (0,2) {$(G,d)$};
 \node[white,label=above:{$x$}] (1) at (0,0) {};
 \path (1) +(0:1.5cm) node[white,label=110:{$v_6$}] (6) {};
 \path (6) +(60:1.5cm) node[white,label=left:{$v_7$}] (7) {};
 \path (6) +(-60:1.5cm) node[white,label=left:{$v_8$}] (8) {};
\draw
(1) -- (6)
(6) -- (7)
(6) -- (8)
;

 \path (1) +(126:1.5cm) node[white,label=above:{$v_2$}] (2) {};
 \path (2) +(-162:1.5cm) node[white,label=above:{$v_3$}] (3) {};
 \path (1) +(-126:1.5cm) node[white,label=below:{$v_5$}] (5) {};
 \path (5) +(162:1.5cm) node[white,label=below:{$v_4$}] (4) {};
\draw
(1) -- (2)
(2) -- (3)
(3) -- (4)
(4) -- (5)
(5) -- (1)
;
\end{tikzpicture}
\end{center}
The 1-negative type gap of $G_1$ may be calculated by \cite[Theorem~3.5]{Wol12} to be
\[
 \Gamma^1_{G_1} = \frac{5}{28}, 
\]
which can be attained by the extremal simplex (found via basic calculus)
\[
 D_{G_1} = \left[ x \bra{ \frac{4}{7} } , v_3 \bra{ \frac{3}{14} }, v_4 \bra{\frac{3}{14}} ; v_2 \bra{ \frac{1}{2} }, v_5 \bra{ \frac{1}{2} } \right].
\]
The 1-negative type gap of $G_2$ can be calculated using Theorem~\ref{pequals1case} part (iii), as it is the additive combination of three edges. Each edge has 1-negative type gap equal to 1, so we have
\[
 \Gamma^1_{G_2} = \bra{1^{-1} + 1^{-1} + 1^{-1}}^{-1} = \frac{1}{3},
\]
which is attained by the extremal simplex
\[
D_{G_2} = \left[ x \bra{ \frac{1}{3}}, v_7 \bra{ \frac{1}{3}}, v_8 \bra{ \frac{1}{3} }   ; v_6 \bra{1} \right].
\]

So by the above theorem we have
\begin{align*}
 \Gamma_G^1 & = \bra{\bra{ \frac{5}{28} }^{-1} + \bra{ \frac{1}{3} }^{-1} }^{-1}\\
            & = \frac{5}{43}. 
\end{align*}

Next, we construct for $(G,d)$ such an extremal simplex $D$ as described in the above theorem. Following the procedure in the proof of Theorem~\ref{pequals1case}, we set
\[
 \lambda_1 = \frac{\Gamma^1_{G_2}}{\Gamma^1_{G_1} + \Gamma^1_{G_2}} = \frac{28}{43}
\]
and
\[
 \lambda_2 = \frac{\Gamma^1_{G_1}}{\Gamma^1_{G_1} + \Gamma^1_{G_2}} = \frac{15}{43}.
\]
We already have examples of extremal simplices for $(G_1,d_1)$ and $(G_2,d_2)$, so we may set
\[
 D_1' = D_{G_1} \quad \text{and} \quad D_2' = D_{G_2}.
\]
Thus using the weights $\lambda_1$ and $\lambda_2$ we have
\[
 D_1 = \left[ x \bra{ \frac{16}{43} } , v_3 \bra{ \frac{6}{43} }, v_4\bra{\frac{6}{43}} ; v_2 \bra{ \frac{14}{43} }, v_5 \bra{ \frac{14}{43} } \right].
\]
and
\[
 D_2  = \left[ x \bra{ \frac{5}{43}}, v_7 \bra{ \frac{5}{43}}, v_8 \bra{ \frac{5}{43} }   ; v_6 \bra{ \frac{15}{43}} \right].
\]
We need to determine the weighting of $x$ in $D$. Its weight in $D_1$ is $\frac{16}{43} = m_{D_1}(x)$, while its weight in $D_2$ is $\frac{5}{43}= m_{D_2}(x)$ so we have
\begin{align*}
 \lambda_D(x) &= m_{D_1}(x) + m_{D_2}(x)\\
              &= \frac{16}{43} + \frac{5}{43}\\
              &=\frac{21}{43}.
\end{align*}
All the other weights come straight from $D_1$ and $D_2$, so
\[
 D = \left[ x \bra{\frac{21}{43}},v_3\bra{\frac{6}{43}},v_4\bra{\frac{6}{43}},v_7\bra{\frac{5}{43}},v_8\bra{\frac{5}{43}} ; v_2\bra{\frac{14}{43}},v_5\bra{\frac{14}{43}},v_6\bra{\frac{15}{43}} \right].
\]
Note that $D$ is a normalized simplex. We can calculate in straight-forward manner and see that indeed
\[
 \gamma(D) = \frac{5}{43}.
\]
\end{Exam}

\section{A Lower Bound Application}\label{boundsection}

The formula for the $p$-negative type gap of $p$-additive combination spaces can be used to provide a lower-bound on the supremal $p$-negative type of such spaces. This comes from combining work in \cite[Theorem~3.3]{LW10} with Theorem~\ref{main} part (iii). First we require some more notation.

\begin{Defn}
 For $n \geq 2$ let
\[
 c(n) = 1 - \frac{1}{2} \bra{ \frac{1}{\lfloor \frac{n}{2} \rfloor} + \frac{1}{\lceil \frac{n}{2 } \rceil} }.
\]
\end{Defn}

\begin{Defn}
 Let $(X,d)$ be a finite metric space. The \emph{scaled diameter of $(X,d)$} is
\[
 \mathfrak D_X = \frac{ \diam(X) }{ \min \seq{ d(x_1,x_2) : x_1 \neq x_2 } }.
\]
\end{Defn}

Recall the following from Li-Weston~\cite{LW10}, slightly paraphrased.

\begin{Thm}\label{LiWeston}
Let $(X,d)$ be a finite metric space with cardinality $n = \abs{X} \geq 3$ and let $p \geq 0$. If the $p$-negative type gap $\Gamma^p_X$ of $(X,d)$ is positive, then
\[
 \wp(X,d) \geq p + \frac{ \ln \bra{ 1 + \frac{\Gamma^p_X}{ \mathfrak D_X^p \cdot c(n) } } }{ \ln \mathfrak D_X }.
\]
\end{Thm}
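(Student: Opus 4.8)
The plan is to prove the stronger statement that $(X,d)$ has generalized roundness $q$ — equivalently $q$-negative type, by \cite{LTW97} — for every exponent $q$ up to the claimed bound, and then conclude with Definition~\ref{supremal}. Since equivalent simplices share the same value of the simplex gap function, and $\gamma^q$ scales by $\lambda^{-2}$ when a $\lambda$-weighted simplex is normalized (equation~\eqref{scaling}), it is enough to check $\gamma^q(D) \ge 0$ for every normalized $(s,t)$-simplex $D = [a_i(m_i); b_j(n_j)]$ in $X$ with distinct underlying points and strictly positive weights; for such a $D$ we have $s,t \ge 1$ and $s + t \le n$, since the $s+t$ points are distinct elements of the $n$-element set $X$.

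The one idea needed is the factorization $d(u,v)^q = d(u,v)^p\, d(u,v)^{q-p}$. Put $\delta = q-p > 0$ and $d_{\min} = \min_{x\neq y} d(x,y)$, so that $\diam(X) = d_{\min}\,\mathfrak D_X$. As all points of $D$ are distinct, every distance occurring in $\gamma^q(D)$ lies in $[d_{\min}, \diam(X)]$, whence term by term $\fR^q(D) \ge d_{\min}^{\delta}\,\fR^p(D)$ and $\fL^q(D) \le \diam(X)^{\delta}\,\fL^p(D)$. Using $\fR^p(D) = \fL^p(D) + \gamma^p(D)$ this gives
\[
\gamma^q(D) \;\ge\; d_{\min}^{\delta}\,\gamma^p(D) \;-\; \bigl(\diam(X)^{\delta} - d_{\min}^{\delta}\bigr)\,\fL^p(D),
\]
and since $D$ is normalized, $\gamma^p(D) \ge \Gamma^p_X$ by \eqref{typegap2}.

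It remains to bound $\fL^p(D)$ from above. Each within-team distance is at most $\diam(X)$, while $\sum_{1\le i_1 < i_2 \le s} m_{i_1}m_{i_2} = \tfrac12\bigl(1 - \sum_i m_i^2\bigr) \le \tfrac12(1 - 1/s)$ by Cauchy--Schwarz (the $a$-team weights summing to $1$), and likewise for the $b$-team, so $\fL^p(D) \le \diam(X)^p\bigl(1 - \tfrac12(1/s + 1/t)\bigr)$. Maximizing $1 - \tfrac12(1/s+1/t)$ over integers $s,t \ge 1$ with $s+t \le n$ forces $s+t = n$ and $\{s,t\} = \{\lfloor n/2\rfloor, \lceil n/2\rceil\}$, yielding $\fL^p(D) \le \diam(X)^p\,c(n)$; this is exactly where the definition of $c(n)$ enters, and where $n \ge 3$ is used to keep $c(n) > 0$.

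Assembling the two estimates, $\gamma^q(D) \ge d_{\min}^{\delta}\Gamma^p_X - \bigl(\diam(X)^{\delta} - d_{\min}^{\delta}\bigr)\diam(X)^p\,c(n)$, and a one-line computation shows this lower bound is nonnegative precisely when $\mathfrak D_X^{\delta} \le 1 + \Gamma^p_X\big/\bigl(\diam(X)^p c(n)\bigr)$, i.e. when $\delta \le \ln\!\bigl(1 + \Gamma^p_X/(\diam(X)^p c(n))\bigr)\big/\ln\mathfrak D_X$ (if $\mathfrak D_X = 1$ the space is equilateral and trivially has $q$-negative type for all $q$, so we may assume $\mathfrak D_X > 1$). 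Hence $(X,d)$ has $q$-negative type for every such $q$, so $\wp(X,d) \ge p + \ln\!\bigl(1 + \Gamma^p_X/(\diam(X)^p c(n))\bigr)/\ln\mathfrak D_X$; after normalizing the metric so that $d_{\min} = 1$ — which changes neither $\wp$ nor $\mathfrak D_X$ and replaces $\diam(X)$ by $\mathfrak D_X$ — this is the stated inequality. I expect the only mildly delicate point to be the combinatorial optimization that isolates $c(n)$; everything else is the interpolation inequality applied entrywise together with $\gamma^p(D) \ge \Gamma^p_X$.
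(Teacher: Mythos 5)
The paper does not actually prove this theorem---it is quoted from Li and Weston \cite[Theorem~3.3]{LW10}---but your argument is correct and is essentially the argument used there: interpolate $d(u,v)^q = d(u,v)^p\,d(u,v)^{q-p}$ term by term to get $\fR^q(D)\ge d_{\min}^{q-p}\fR^p(D)$ and $\fL^q(D)\le \diam(X)^{q-p}\fL^p(D)$, bound $\fL^p(D)\le \diam(X)^p c(n)$ via Cauchy--Schwarz on the normalized team weights (which is where $n=|X|\ge 3$ and the definition of $c(n)$ enter), and solve for the largest admissible $q-p$. You also correctly identify the normalization caveat---the bound as stated requires $\Gamma^p_X$ to be computed for the rescaled metric with minimum nonzero distance $1$, so that $\diam(X)=\mathfrak D_X$---which is precisely the point the paper flags in the remark immediately following the theorem.
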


Note that Theorem~\ref{LiWeston} requires that the $\Gamma^p_X$ comes from the (possibly rescaled) version of $(X,d)$ in which $\mathfrak{D}_X = \diam(X)$, although this not explicitly stated in \cite{LW10}.

From Theorem~\ref{LiWeston} and Theorem~\ref{main} part (iii) we obtain the following.

\begin{Theorem}\label{boundimproved}
 Let $(X_1,d_1),\ldots, (X_n,d_n)$ be finite spaces with minimum non-zero distances equal to 1. If $(X,d)$ is any $p$-additive combination of $(X_1,d_1),\ldots,(X_n,d_n)$ and each of the $(X_1,d_1),\ldots,(X_n,d_n)$ has strict $p$-negative type, then
\[
 \wp(X,d) \geq p + p \frac{ \ln \bra{ 1 + \bra{\sum_{i=1}^n \bra{\Gamma^p_{X_i}}^{-1}}^{-1} \cdot \bra{ \sum_{i=1}^n \diam(X_i)^p }^{-\frac{1}{p}} \cdot c \bra{ \sum_{i=1}^n \abs{X_i} -n +1  }^{-1} } }{ \ln \bra{ \sum_{i=1}^n \diam(X_i)^p } }.
\]

\end{Theorem}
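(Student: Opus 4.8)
The whole statement follows by applying Theorem~\ref{LiWeston} to $(X,d)$ and feeding in the gap formula of Theorem~\ref{main} part~(iii), so the plan is simply to assemble the data that Theorem~\ref{LiWeston} requires: a positive gap $\Gamma^p_X$, its value, the cardinality $\abs X$, and a usable upper bound for the scaled diameter $\mathfrak D_X$.

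First I would verify the hypotheses. Each $(X_i,d_i)$ is finite with strict $p$-negative type, so $\Gamma^p_{X_i}>0$ for every $i$ (the equivalence, for finite spaces, of strict $p$-negative type with positivity of the gap is recalled in Section~\ref{GeneralizedRoundness}); equivalently, by \eqref{typegapidentity}, $\Gamma^1_{X^p_i}>0$. By Definition~\ref{padditive}, $X^p$ is an additive combination of the finitely many finite spaces $X^p_1,\dots,X^p_n$, hence $X$ is finite, and Theorem~\ref{main} part~(iii) applies to give $\Gamma^p_X=\bra{\sum_{i=1}^n(\Gamma^p_{X_i})^{-1}}^{-1}>0$. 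Unwinding the iterated gluing of Definitions~\ref{additive} and~\ref{additivemulti}, each of the $n-1$ successive combinations identifies exactly one point, so $\abs X=\sum_{i=1}^n\abs{X_i}-(n-1)=\sum_{i=1}^n\abs{X_i}-n+1$; this is $\geq 3$ (each $\abs{X_i}\geq 2$ and $n\geq 2$) and is exactly the integer that appears inside $c(\cdot)$ in the statement, so nothing needs to be estimated there.

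Next I would control $\mathfrak D_X$. Since each $(X_i,d_i)$ has minimum non-zero distance $1$, and a distance in $X$ is either a distance inside a single component or is obtained by adding the $p$-th powers of component distances along the (unique) chain of components joining the two points, every pair of distinct points of $X$ is at distance $\geq 1$, with equality attained inside any one $X_i$. Hence $\mathfrak D_X=\diam(X)$, so the $\Gamma^p_X$ just computed is the ``correctly scaled'' one demanded by the remark after Theorem~\ref{LiWeston}. For the diameter itself: the chain of components joining two points is a path in the underlying tree, so it meets each component at most once, and its contribution inside a visited component $X_i$ is at most the $d^p_i$-diameter $\diam(X_i)^p$ of that component; summing gives $\diam(X)^p=\diam(X^p)\leq\sum_{i=1}^n\diam(X_i)^p$, and in particular $\diam(X)>1$, so $\ln\mathfrak D_X>0$.

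Finally, substitute into Theorem~\ref{LiWeston} applied to $(X,d)$: replace $\mathfrak D_X^p$ by the larger quantity $\sum_{i=1}^n\diam(X_i)^p$ inside the logarithm (which only decreases $\ln(1+\cdot)$), replace $\ln\mathfrak D_X$ by the larger quantity $\frac1p\ln\bra{\sum_{i=1}^n\diam(X_i)^p}$ in the outer denominator (which, the fraction being positive, only decreases the whole expression), and insert $\Gamma^p_X=\bra{\sum_{i=1}^n(\Gamma^p_{X_i})^{-1}}^{-1}$ together with $\abs X=\sum_{i=1}^n\abs{X_i}-n+1$; a one-line rearrangement then yields the stated inequality. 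I do not expect any genuine obstacle, as all the substantial work is already done in Theorem~\ref{LiWeston} and Theorem~\ref{main} part~(iii); the only points requiring a moment's care are the count of $\abs X$, the observation that $(X,d)$ still has minimum non-zero distance $1$ (so that $\mathfrak D_X=\diam(X)$ and the gap value from Theorem~\ref{main} may be used verbatim), and the diameter estimate $\diam(X)^p\leq\sum_i\diam(X_i)^p$, each of which is immediate once the tree-of-spaces structure of a $p$-additive combination is unwound.
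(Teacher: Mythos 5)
Your proposal is correct and follows essentially the same route as the paper: apply Theorem~\ref{LiWeston} to $(X,d)$, insert the gap formula from Theorem~\ref{main} part (iii), count $\abs{X}=\sum_i\abs{X_i}-n+1$, and bound $\diam(X)^p\leq\sum_i\diam(X_i)^p$ using the tree-of-spaces structure, with the monotonicity observations justifying each substitution. If anything you are slightly more careful than the paper, since you explicitly verify that the minimum non-zero distance of $(X,d)$ is still $1$ (so $\mathfrak D_X=\diam(X)$) and that $\ln\mathfrak D_X>0$, points the paper leaves implicit.
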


\begin{proof}

Theorem~\ref{main} gives us an expression for $\Gamma^p_X$ in terms of $\Gamma^p_{X_1}, \ldots, \Gamma^p_{X_n}$. So the only other terms we need to consider are $\abs{X}$ and $\mathfrak D_X$.

By the definition of $p$-additive combination spaces, if $(X,d)$ is the $p$-additive combination of $(X_1,d_1)$ and $(X_2,d_2)$, then $\abs{X} = \abs{X_1} + \abs{X_2} -1$, provided all are finite. Therefore, for our space $(X,d)$ which is the $p$-additive combination of $n$ spaces, we have
\[
 \abs{X} = \sum_{i=1}^n \abs{X_i} -n + 1.
\]

Since the minimum non-zero distance in all of $(X_1,d_1),\ldots,(X_n,d_n)$ is 1, we have $\mathfrak D_{X_i} = \diam(X_i)$ for $i=1, \ldots, n$. Considering all possible ways to combine $(X_1,d_1), \ldots, (X_n,d_n)$ to form $(X,d)$, we see that 
\[
 \min_{i=1,\ldots, n} \seq{ \diam(X_i)} \leq \diam(X) \leq \bra{ \sum_{i=1}^n \diam(X_i)^p }^{\frac{1}{p}}.
\]
Using this upper bound for $\diam(X)$, $\Gamma^p_{X}$ from Theorem~\ref{main} part (iii) and our above formula for $\abs{X}$, combined with Theorem~\ref{LiWeston} we have
\begin{align*}
 \wp(X,d) & \geq p + \frac{ \ln \bra{ 1 + \frac{\Gamma^p_X}{ \mathfrak D_X^p \cdot c(\abs{X}) } } }{ \ln \mathfrak D_X } \\
          & = p + \frac{ \ln \bra{ 1 + \frac{\Gamma^p_X}{ \diam(X)^p \cdot c(\abs{X}) } } }{ \ln \bra{\diam(X)} }\\
          & \geq p + \frac{ \ln \bra{ 1 + \bra{\sum_{i=1}^n \bra{\Gamma^p_{X_i}}^{-1}}^{-1} \cdot \bra{ \sum_{i=1}^n \diam(X_i)^p }^{-\frac{1}{p}} \cdot c \bra{ \sum_{i=1}^n \abs{X_i} -n +1  }^{-1} } }{ \ln \bra{ \bra{ \sum_{i=1}^n \diam(X_i)^p  }^\frac{1}{p} } }\\
          & = p + p \frac{ \ln \bra{ 1 + \bra{\sum_{i=1}^n \bra{\Gamma^p_{X_i}}^{-1}}^{-1} \cdot \bra{ \sum_{i=1}^n \diam(X_i)^p }^{-\frac{1}{p}} \cdot c \bra{ \sum_{i=1}^n \abs{X_i} -n +1  }^{-1} } }{ \ln \bra{ \sum_{i=1}^n \diam(X_i)^p } }.
\end{align*}

\end{proof}

Note that we cannot do away with the assumption that the minimum non-zero distances in $(X_1d_1), \ldots, (X_n,d_n)$ are all 1. In general, nothing can be said about $\mathfrak D_X$ in terms of the $\mathfrak D_{X_1},\ldots, \mathfrak D_{X_n}$ alone, as shown by the following example.

\begin{Exam}
Let $X_1 = \seq{a,b}$ with $d(a,b) =1$, and let $X_2 = \seq{b,c}$ with $d(b,c) = \alpha \geq 1$. Now let $(X,d)$ be the additive combination of $(X_1,d_1)$ and $(X_2,d_2)$ formed by joining the spaces together at $b$. Then $\mathfrak D_{X_1} = \mathfrak D_{X_2} =1$, but
\[
 \mathfrak D_{X} = \alpha + 1
\]
As $\alpha$ may be any positive number, $\mathfrak D_X$ can take any value in the interval $[1,\infty)$, even though $\mathfrak D_{X_1}= \mathfrak D_{X_2} =1$.
\end{Exam}

\begin{Exam}
 In the example $(G,d)$ from before we found that $\Gamma^1_G = \frac{5}{43}$. We have $\abs{G} = 8$, and the scaled diameter is $\mathfrak D_G = 4$. So by Theorem~\ref{LiWeston} bound we have
\[
 \wp(G,d) \geq 1 + \frac{ \ln \bra{ 1 + \frac{\frac{5}{43}}{ 4 \cdot \frac{3}{4} } } }{ \ln 4 } = 1.027...
\]
Using \cite[Corollary~2.4]{San12} can calculate approximately that $\wp(G,d) = 1.36..$ So the bound is not at all sharp. This is to be expected since the lower-bound is uniform for many quite different spaces, which one expects (and finds experimentally) to have quite different supremal $p$-negative types. 
\end{Exam}

\bibliographystyle{alpha}
\bibliography{/home/nfs/z3206935/hdrive/Bibliography/bibliography.bib}

\begin{thebibliography}{HLMT98}

\bibitem[BDCK66]{BDCK67}
J.~Bretagnolle, D.~Dacunha-Castelle, and J.~Krivine.
\newblock Lois stables et espaces {$L^{p}$}.
\newblock {\em Ann. Inst. H. Poincar\'e Sect. B (N.S.)}, 2:231--259, 1965/1966.

\bibitem[DW08a]{DW08b}
I.~Doust and A.~Weston.
\newblock Corrigendum to: ``{E}nhanced negative type for finite metric trees''
  [{J}. {F}unct. {A}nal. {\bf 254} (2008), no. 9, 2336--2364; 2409164].
\newblock {\em J. Funct. Anal.}, 255(2):532--533, 2008.

\bibitem[DW08b]{DW08a}
I.~Doust and A.~Weston.
\newblock Enhanced negative type for finite metric trees.
\newblock {\em J. Funct. Anal.}, 254(9):2336--2364, 2008.

\bibitem[Enf69]{Enf69}
P.~Enflo.
\newblock On a problem of {S}mirnov.
\newblock {\em Ark. Mat.}, 8:107--109, 1969.

\bibitem[HLMT98]{HLMT98}
P.~G. Hjorth, P.~Lison{\u{e}}k, S.~Markvorsen, and C.~Thomassen.
\newblock Finite metric spaces of strictly negative type.
\newblock {\em Linear Algebra Appl.}, 270:255--273, 1998.

\bibitem[Kok02]{Kok02}
S.~L. Kokkendorff.
\newblock {\em Geometry and Combinatorics}.
\newblock PhD thesis, Technical University of Denmark, December 2002.

\bibitem[LTW97]{LTW97}
C.~J. Lennard, A.~M. Tonge, and A.~Weston.
\newblock Generalized roundness and negative type.
\newblock {\em Michigan Math. J.}, 44(1):37--45, 1997.

\bibitem[LW10]{LW10}
H.~Li and A.~Weston.
\newblock Strict {$p$}-negative type of a metric space.
\newblock {\em Positivity}, 14(3):529--545, 2010.

\bibitem[S{\'a}n12]{San12}
S.~S{\'a}nchez.
\newblock On the supremal {$p$}-negative type of finite metric spaces.
\newblock {\em J. Math. Anal. Appl.}, 389(1):98--107, 2012.

\bibitem[Sch38]{Sch37}
I.~J. Schoenberg.
\newblock Metric spaces and positive definite functions.
\newblock {\em Trans. Amer. Math. Soc.}, 44(3):522--536, 1938.

\bibitem[Wol12]{Wol12}
Reinhard Wolf.
\newblock On the gap of finite metric spaces of {$p$}-negative type.
\newblock {\em Linear Algebra Appl.}, 436(5):1246--1257, 2012.

\bibitem[WW75]{WW75}
J.~H. Wells and L.~R. Williams.
\newblock {\em Embeddings and extensions in analysis}.
\newblock Springer-Verlag, New York, 1975.
\newblock Ergebnisse der Mathematik und ihrer Grenzgebiete, Band 84.

\end{thebibliography}

\end{document}